\DeclareMathOperator{\diag}{diag}
\DeclareMathOperator{\arccot}{arccot}
\DeclareMathOperator{\Res}{Res}
\DeclareMathOperator{\sgn}{sgn}
\DeclareMathOperator{\CH}{CH}
\DeclareMathOperator{\SH}{SH}
\DeclareMathOperator{\CC}{CC}
\DeclareMathOperator{\SC}{SC}
\newtheorem{lemma}{Lemma}[section]
\newtheorem{proposition}{Proposition}[section]
\newtheorem{theorem}{Theorem}[section]
\newcommand{\M}{\mathbf M}
\theoremstyle{remark}
\newtheorem*{remark}{Remark}
\begin{document}

\title{\LARGE Numerical Approximation of the Fractional Laplacian on $\mathbb R$ Using Orthogonal Families}

\author{\normalsize Jorge Cayama$^1$ \and \normalsize Carlota M. Cuesta$^1$ \and \normalsize Francisco de la Hoz$^2$}

\date{
	\footnotesize
	$^1$Department of Mathematics, Faculty of Science and Technology, University of the Basque Country UPV/EHU, Barrio Sarriena S/N, 48940 Leioa, Spain \\[1em]
	$^2$Department of Applied Mathematics and Statistics and Operations Research, Faculty of Science and Technology, University of the Basque Country UPV/EHU, Barrio~Sarriena~S/N, 48940 Leioa, Spain
}

\maketitle

\begin{abstract}

In this paper, using well-known complex variable techniques, we compute explicitly, in terms of the ${}_2F_1$ Gaussian hypergeometric function, the one-dimensional fractional Laplacian of the Higgins functions, the Christov functions, and their sine-like and cosine-like versions. After discussing the numerical difficulties in the implementation of the proposed formulas, we develop a method using variable precision arithmetic that gives accurate results. 

\end{abstract}

\medskip

\noindent\textit{Keywords:}

\noindent Fractional Laplacian, Pseudospectral methods, rational Chebyshev functions, variable precision arithmetic
	
\section{Introduction}

In this paper, we obtain explicit expressions for the one dimensional fractional Laplacian of the Higgins and Christov functions. We also explain how to implement these results efficiently in \textsc{Matlab} \cite{matlab}, and give numerical examples as an application. More precisely, we work with the following definition \cite{kwasnicki} of the fractional Laplacian:
\begin{equation}
\label{e:fraclap0}
(-\Delta)^{\alpha/2}u(x) = c_\alpha\int_{-\infty}^\infty\frac{u(x)-u(x+y)}{|y|^{1+\alpha}}dy, \text{ with }c_\alpha = \alpha\frac{2^{\alpha-1}\Gamma(1/2+\alpha/2)}{\sqrt{\pi}\Gamma(1-\alpha/2)}, \ \alpha\in(0,2).
\end{equation}

\noindent We recall that the Fourier transform of $u$ is $\hat u(\xi) = \int_{-\infty}^{\infty}u(x)e^{-ix\xi}dx$, and that the inverse Fourier transform is $u(x) = \frac{1}{2\pi}\int_{-\infty}^{\infty}\hat u(\xi)e^{ix\xi}d\xi$; hence, another definition of the fractional Laplacian consistent with \eqref{e:fraclap0} is given by associating the operator with the Fourier symbol:
\begin{equation}\label{fourier:sym}
[(-\Delta)^{\alpha/2}u]^\wedge(\xi) = |\xi|^\alpha \hat u(\xi).
\end{equation}

\noindent We note that, when $\alpha=0$, according to \eqref{fourier:sym}, $(-\Delta)^0u(x) = u(x)$. We also observe, however, that, when the Fourier transform of $u(x)$ does not exist in the classical sense, the limit $\alpha\to0^+$ is singular. For instance, take $u(x) = 1$, whose Fourier transform is the Dirac delta distribution, i.e., $\hat u(\xi) = 2\pi\delta(\xi)$; then
\begin{equation}
\label{e:discontinuousa0}
(-\Delta)^\alpha 1 = 
\begin{cases}
1, & \alpha = 0,
	\cr
0, & \alpha > 0.
\end{cases}
\end{equation}

\noindent On the other hand, when $\alpha=2$, $(-\Delta)^{2/2} u(x) = -u_{xx}(x)$; and, when $\alpha = 1$, $\widehat{(-\Delta)^{1/2}u}(\xi) = |\xi|\hat u(\xi) = -i\sgn(\xi)(i\xi)\hat u(\xi) = \widehat{\mathcal H(u_x)}(\xi)$, where $\mathcal H$ denotes the Hilbert transform \cite{hilbert}:
\begin{equation}\label{H:trans}
\mathcal H (u)(x) = \frac1\pi\int_{-\infty}^{\infty}\frac{u(y)}{x-y}dy,
\end{equation}

\noindent which has the Fourier symbol $\widehat{\mathcal H(u)}(\xi) = -i\sgn(\xi)\hat u(\xi)$. Thus, formally, $(-\Delta)^{1/2}u = \mathcal H(u_x)$.

The fractional Laplacian appears in a number of applications (see, for instance, \cite[Table 1]{lischke} and the references therein). In what regards its numerical computation, most references in the literature involve truncation of the domain (see for instance the works \cite{IlicLiuTurnerAnh2005}, \cite{YangLiuTurner2010}, and \cite{Bueno-OrovioKayBurrage2012}, where, although a
truncation of the domain is not explicitly given, the method relays on the spectral definition of the fractional Laplacian on a bounded domain). However, in a recent paper \cite{cayamacuestadelahoz2019}, we proposed a pseudospectral method to compute the fractional Laplacian of a bounded function $u(x)$ on $\mathbb R$ without truncation. The idea was to map $\mathbb R$ into the finite interval $[0,\pi]$ by means of the change of variable $x = L\cot(s)$, and then obtain the trigonometric Fourier series expansion of $u(x(s))$. Therefore, the central point of \cite{cayamacuestadelahoz2019} was the efficient and accurate numerical computation of the fractional Laplacian of $e^{ins}$, for $n\in\mathbb Z$; observe that the resulting expressions for $(-\Delta)^{1/2}e^{ins}$ are quite different, depending on whether $n$ is even or odd. At this point, a crucial observation is that the equivalent on $\mathbb R$ of the functions $e^{i2ns}$ (i.e., the even case) are precisely the complex Higgins functions defined in \cite{boyd1990} (see also \cite{higgins,narayan}):
\begin{equation}
\label{e:ln}
\lambda_n(x) = \left(\frac{ix - 1}{ix + 1}\right)^n, \quad n\in\mathbb Z,
\end{equation}

\noindent because $\lambda_n(\cot(s)) = e^{i2ns}$. Indeed, in this paper, thanks to the structure of $\lambda_n(x)$, and after rewriting \eqref{e:fraclap0} as
\begin{equation}
\label{e:fraclapl}
(-\Delta)^{\alpha/2}u(x) = \frac{c_{\alpha}}{\alpha}\int_{0}^\infty \frac{u_{x}(x-y)-u_{x}(x+y)}{y^{\alpha}}dy,
\end{equation}

\noindent we obtain an explicit expression of their fractional Laplacian by means of contour integration:
\begin{equation}
\label{e:fraclapln}
(-\Delta)^{\alpha/2}\lambda_n(x) =
\begin{cases}
0, & n = 0,
\\
-\displaystyle{\frac{2|n|\Gamma(1+\alpha)}{(i\sgn(n)x+1)^{1+\alpha}}{}_2F_1\left(1-|n|,1+\alpha; 2;\frac{2}{i\sgn(n)x+1}\right)}, & n\in\mathbb Z\backslash \{0\},
\end{cases}
\end{equation}

\noindent where ${}_2F_1$ is the Gaussian hypergeometric function (see for instance \cite[Ch. 15]{abramowitz}). From this result, we also derive several other related ones outlined below.

The structure of this paper is as follows. In Section \ref{s:fraclapln}, we prove \eqref{e:fraclapln}, which constitutes the main result. We observe that $\{\lambda_n(x)\}$ is a complete orthogonal system in $\mathcal L^2(\mathbb R)$ with weight $w(x) = 1/(\pi(1+x^2))$, because $\{e^{i2ns}\}$ is a complete orthonormal system in $\mathcal L^2([0,\pi])$, normalized by $w= 1/\pi$. Therefore, the related family of functions
\begin{equation}
\label{e:mun}
\mu_n(x) = \frac{(ix - 1)^n}{(ix + 1)^{n+1}}, \quad n\in\mathbb Z,
\end{equation}

\noindent known as the complex Christov functions \cite{boyd1990,christov,narayan,wiener}, form a complete orthogonal system in $\mathcal L^2(\mathbb R)$ (normalized by the factor $1/\pi$). 

Throughout this paper, we use the definition and notation from \cite{boyd1990} for the following families of functions:
\begin{itemize}
\item Cosine-like Higgins functions:
\begin{equation}
\label{e:CH}
\CH_{2n}(x) = \frac{\lambda_n(x) + \lambda_{-n}(x)}2, \quad n = 0, 1, 2, \ldots
\end{equation}

\item Sine-like Higgins functions:
\begin{equation}
\label{e:SH}
\SH_{2n+1}(x) = \frac{\lambda_{n+1}(x) - \lambda_{-n-1}(x)}{2i}, \quad n = 0, 1, 2, \ldots
\end{equation}

\item Cosine-like Christov functions:
\begin{equation}
\label{e:CC}
\CC_{2n}(x) = \frac{\mu_n(x) - \mu_{-n-1}(x)}2, \quad n = 0, 1, 2, \ldots
\end{equation}

\item Sine-like Christov functions:
\begin{equation}
\label{e:SC}
\SC_{2n+1}(x) = -\frac{\mu_n(x) + \mu_{-n-1}(x)}{2i}, \quad n = 0, 1, 2, \ldots
\end{equation}
\end{itemize}

\noindent In Section \ref{s:fraclapmore}, starting from \eqref{e:fraclapln}, we calculate the fractional Laplacian of \eqref{e:mun}-\eqref{e:SC}.

Even if the fractional Laplacian of all the families considered here can be computed accurately with the technique explained in \cite{cayamacuestadelahoz2019}, expressions like \eqref{e:fraclapln} have the advantage of being very compact and, hence, it is effortless to use them in numerical applications, provided that fast accurate implementations of the Gaussian hypergeometric function ${}_2F_1$ are available. Therefore, in Section \ref{s:numerical}, using \textsc{Matlab}, we test their adequacy from a numerical point of view, comparing the numerical results with those in \cite{cayamacuestadelahoz2019}. On the one hand, for moderately large values of $n$, the use of variable precision arithmetic seems unavoidable; on the other hand, our implementation of ${}_2F_1$ largely outperforms that of \textsc{Matlab}. Finally, even if the method in \cite{cayamacuestadelahoz2019} is faster, the method in this paper is much easier to implement and still competitive for not too large values of $n$.

\section{Fractional Laplacian of the complex Higgins functions}

\label{s:fraclapln}

Before we proceed, let us recall some well-known definitions. Given $z\in\mathbb C$, the generalized binomial coefficient is defined by
\begin{equation*}
\binom{z}{n} = 
\begin{cases}
\dfrac{z(z-1)\ldots(z-n+1)}{n!}, & n\in\mathbb N,
\\
1, &  n = 0.
\end{cases}
\end{equation*}

\noindent where $\mathbb N = \{1, 2, 3, \ldots\}$. Therefore, if $z$ is a nonnegative integer, and $n > z$, then $\binom{z}{n} = 0$. Furthermore, it is immediate to check that, for all $z$ and $n$,
$$
\binom{z}{n} = (-1)^n\binom{n-1-z}{n}.
$$

\noindent We will also need the Pochhammer symbol, which represents the rising factorial, and is defined by
$$
(z)_n =
\begin{cases}
z(z+1)\ldots(z+n-1), & n\in\mathbb N,
\\
1, & n = 0.
\end{cases}
$$

\noindent Observe that, when $z$ is not zero or a negative integer, an equivalent definition is
$$
(z)_n = \frac{\Gamma(z+n)}{\Gamma(z)};
$$

\noindent in particular, when $z\in\mathbb N$,
$$
(z)_n = \frac{(z+n-1)!}{(z-1)!}.
$$

\noindent Remark that, if $z$ is a negative integer or zero, and $n > |z|$, then $(z)_n = 0$. Moreover, the following identities will be useful, too:
\begin{align*}
(-z)_n & = (-1)^n(z-n+1)_n,
\cr
\binom{z}{n} & = \frac{(z-n+1)_n}{n!} = \frac{(-1)^n(-z)_n}{n!}.
\end{align*}

\noindent The Pochhammer symbol also appears in the definition of the Gaussian hypergeometric function ${}_2F_1$ (see for instance \cite[Ch. 15]{abramowitz}). Let $a, b, c, z\in\mathbb C$; then, ${}_2F_1$  is defined by 
\begin{equation}
\label{e:2F1}
{}_2F_1(a, b; c; z) = \sum_{k=0}^{\infty}\frac{(a)_k(b)_k}{(c)_k}\frac{z^k}{k!}.
\end{equation}

\noindent In general, the infinite series converges for $|z|< 1$. However, in our case, we take $a$ to be a negative integer, so the sum is finite, because of the properties of the Pochhammer symbol. More precisely, in this paper, we are interested in the following two particular cases:
\begin{align}
    {}_2F_1(-m, 1+\alpha; 1; z)  =
    \sum_{k=0}^m\binom{m}{k}\binom{-1-\alpha}{k}z^k,
\label{hyper:1}\\
  {}_2F_1(-m, 1+\alpha; 2; z)  =
  -\frac{1}{\alpha}\sum_{k=0}^m\binom{m}{k}\binom{-\alpha}{k+1}z^k = \frac{1}{m+1}\sum_{k=0}^m\binom{m+1}{k+1}\binom{-1-\alpha}{k}z^k,
\label{hyper:2}
\end{align}

\noindent for $m\in\mathbb N$. Observe that the identities also hold after replacing $k$ by $m-k$ in the sums, a fact that we will also use below. On the other hand, if $m = 0$, ${}_2F_1(0, 1+\alpha; 1; z) = {}_2F_1(0, 1+\alpha; 2; z) = 1$.

Bearing in mind the previous arguments, let us prove \eqref{e:fraclapln}, which is the main result of this paper. Note that we work with the binomial coefficient rather than with the Pochhammer symbol, because we think that the former is more intuitive.

\begin{theorem}

\label{t:theo1}
	
Let $\lambda_n(x)$ be defined as in \eqref{e:ln}. Then, $(-\Delta)^{\alpha/2}\lambda_n(x)$ is given by \eqref{e:fraclapln}.

\end{theorem}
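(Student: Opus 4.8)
The plan is to compute the integral in \eqref{e:fraclapl} directly by residues. First I would handle the trivial case: $\lambda_0(x)\equiv 1$, so $(-\Delta)^{\alpha/2}\lambda_0 = 0$ by \eqref{e:discontinuousa0}. For $n\neq 0$, I would first reduce to $n\geq 1$: since $\lambda_{-n}(x) = \overline{\lambda_n(x)}$ for real $x$ (equivalently $\lambda_{-n}(x) = \lambda_n(-x)$), and the fractional Laplacian commutes with this symmetry, it suffices to treat $n\geq 1$ and then read off the $n\leq -1$ case by replacing $x$ with $-x$, which accounts for the $\sgn(n)$ in \eqref{e:fraclapln}. So fix $n\geq 1$ and compute $\lambda_n'(x) = 2ni/( (ix+1)^2)\cdot ((ix-1)/(ix+1))^{n-1}$; more usefully, write $\lambda_n(x) = ((ix-1)/(ix+1))^n$ and note $\lambda_n$ extends to a holomorphic function of a complex variable off the pole at $x = i$ (and, for $n<0$, off $x=-i$). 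Plug the derivative into \eqref{e:fraclapl}:
\[
(-\Delta)^{\alpha/2}\lambda_n(x) = \frac{c_\alpha}{\alpha}\int_0^\infty \frac{\lambda_n'(x-y) - \lambda_n'(x+y)}{y^\alpha}\,dy.
\]

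The key step is to evaluate this via contour integration. I would substitute $y\mapsto$ a complex variable and rotate the contour: for the term $\lambda_n'(x+y)$, rotate the ray $[0,\infty)$ to the ray $e^{i\pi/2}[0,\infty)$ (i.e. set $y = it$), which is permissible because $\lambda_n'(x+y)$ decays and is holomorphic in the upper half-plane except at $y = i - x$, whose contribution is controlled since we stay in the region $\mathrm{Im}\,y < 1$ for the relevant part — actually the cleaner route is to use the known formula (derivable by this rotation) that for a function $f$ holomorphic and suitably decaying in a half-plane,
\[
\frac{c_\alpha}{\alpha}\int_0^\infty \frac{f'(x-y)-f'(x+y)}{y^\alpha}\,dy
\]
can be rewritten using $\int_0^\infty t^{-\alpha}e^{-pt}\,dt = \Gamma(1-\alpha)p^{\alpha-1}$ after expressing $\lambda_n'$ through its partial-fraction / power-series structure in $1/(ix+1)$. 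Concretely, I would expand $\lambda_n(x)$ binomially: writing $w = 2/(ix+1)$ so that $(ix-1)/(ix+1) = 1 - w$, we get $\lambda_n(x) = (1-w)^n = \sum_{k}\binom{n}{k}(-w)^k$ — but since we need something valid for the integral transform, better to expand in powers of $1/(i(x\pm y)+1)$ and integrate term by term against $y^{-\alpha}$, each term producing a Beta-type integral yielding Gamma factors; resumming the resulting finite sum and recognizing it as a ${}_2F_1$ via \eqref{hyper:2} gives \eqref{e:fraclapln}.

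Alternatively — and this is probably the approach actually taken — I would keep $\lambda_n'$ intact and directly apply the residue theorem to the contour integral over $y$. Combining the two half-line integrals into one integral of $\lambda_n'(x+y)y^{-\alpha}$ (with a suitable branch) over a keyhole/wedge contour, the value is $2\pi i$ times the residue at the single pole $y = i(1+ix)\cdot(\text{something})$, i.e. where $i(x+y)+1 = 0$, namely $y = i - x$ hmm, rather $y$ such that $x+y = i$. The pole of $\lambda_n(x+y)$ (and its derivative) sits at $x+y = i$, i.e. $y = i - x$; its residue involves the $(1+\alpha)$-th power $((i\sgn(n)x+1))^{-1-\alpha}$ after accounting for $y^{-\alpha}$ evaluated at the pole, and the polynomial part of $\lambda_n'$ near the pole produces exactly the truncated hypergeometric sum $\sum_{k=0}^{|n|-1}$ that \eqref{hyper:2} repackages. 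The Gamma functions $\Gamma(1+\alpha)$ and the constant $c_\alpha/\alpha$ combine through the reflection formula to give the prefactor $-2|n|\Gamma(1+\alpha)$.

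\textbf{Main obstacle.} The delicate point is justifying the contour rotation/deformation: one must check that the arc at infinity contributes nothing (using $|\lambda_n'(z)|\to 0$ as $|z|\to\infty$ in the relevant sector, together with $|y^{-\alpha}|$ integrability at $0$ and decay at $\infty$), and one must correctly track the branch of $y^{-\alpha}$ and the orientation so that the phase factors $e^{-i\pi\alpha/2}$ or similar combine cleanly with $c_\alpha$. A secondary bookkeeping obstacle is matching the finite sum that falls out of the residue computation to the precise ${}_2F_1(1-|n|,1+\alpha;2;\cdot)$ in \eqref{e:fraclapln}: this requires careful use of the Pochhammer/binomial identities listed before the theorem, in particular the two forms in \eqref{hyper:2} and the $k\mapsto m-k$ reindexing, and confirming the argument of ${}_2F_1$ is indeed $2/(i\sgn(n)x+1)$ rather than its complement (an application of a Pfaff/Euler transformation may be needed, or it may come out directly depending on which variable the residue expansion is organized around).
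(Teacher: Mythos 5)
Your overall strategy --- insert $\lambda_n'$ into \eqref{e:fraclapl}, evaluate the resulting integral by residues against the branch cut of $y^{-\alpha}$, and repackage the finite residue sum as a ${}_2F_1$ via \eqref{hyper:2} --- is indeed the strategy of the paper, and your reductions ($n=0$ trivial, $n<0$ by the conjugation symmetry $\lambda_{-n}=\overline{\lambda_n}$) also match. But the proposal defers exactly the steps that carry the content, and the one concrete claim you commit to in the residue computation is not the one that works. The paper does \emph{not} reduce to a single integral of $\lambda_n'(x+y)y^{-\alpha}$ with one pole: it keeps the difference intact, forming the single integrand
\[
g_n(z;x)=\frac{(z+x+i)^{n-1}(z-x+i)^{n+1}-(z-x-i)^{n-1}(z+x-i)^{n+1}}{z^{\alpha}(z+x-i)^{n+1}(z-x+i)^{n+1}},
\]
and integrates it over a keyhole contour surrounding the branch cut along the negative real axis. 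That contour encloses \emph{two} poles, $z=i-x$ (from the $x+z$ factor) and $z=x-i$ (from the $x-z$ factor); both residues are needed, each an order-$n$ Leibniz derivative giving $\pm\sum_{k=1}^{n}\binom{n-1}{k-1}\binom{-\alpha}{k}\bigl(\pm(i-x)\bigr)^{-\alpha-k}(\pm 2i)^{k-1}$. The two straight segments on either side of the cut contribute $(e^{i\pi\alpha}-e^{-i\pi\alpha})\int_0^\infty g_n\,dy$, and it is the division by $2i\sin(\pi\alpha)$ together with Euler's reflection and Legendre's duplication formulas that produces the prefactor $-2n\Gamma(1+\alpha)$. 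Your single-pole version omits the $z=x-i$ contribution entirely.

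Second, keeping the two terms combined is not mere bookkeeping: the numerator of $g_n$ vanishes at $z=0$, so $g_n(z;x)=O(z^{1-\alpha})$ there, and the small circle around the origin contributes nothing for every $\alpha\in(0,2)$. If you split off $\lambda_n'(x+y)\,y^{-\alpha}$ as a single integral over $\mathbb R$ (or rotate the two rays separately, as in your first variant), each piece behaves like $y^{-\alpha}$ at the origin and is not integrable for $\alpha\ge 1$, so the deformation cannot be justified termwise in that range without a finite-part regularization you have not supplied. These two issues --- the missing second pole and the cancellation at the origin --- are precisely what your ``main obstacle'' paragraph gestures at but does not resolve, so as written the argument is a plausible plan rather than a proof.
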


\begin{proof}

The case with $n = 0$ is trivial, because $\lambda_0(x) = 1$. Assume $n\in\mathbb N$. The derivative of \eqref{e:ln} is
$$
\lambda_n'(x) = -\frac{2ni}{(x - i)^2}\left(\frac{x + i}{x - i}\right)^{n-1}.
$$

\noindent Introducing this expression in \eqref{e:fraclapl}, we get
\begin{equation}
\label{e:fraclapln1}
(-\Delta)^{\alpha/2}\lambda_n(x) = 2ni\frac{2^{\alpha-1}\Gamma(1/2+\alpha/2)}{\sqrt{\pi}\Gamma(1-\alpha/2)}\int_{0}^\infty g_n(y;x)dy,
\end{equation}

\noindent with integrand
$$
g_{n}(z;x) = \frac{(z+x+i)^{n-1}(z-x+i)^{n+1} - (z-x-i)^{n-1}(z+x-i)^{n+1}}{z^{\alpha}(z+x-i)^{n+1}(z-x+i)^{n+1}},
$$

\noindent where, in this notation, we regard $x$ as a parameter, rather than as an independent variable.

We next compute \eqref{e:fraclapln1} for every $x$, by integrating $g_{n}(z;x)$ along certain integration contours $C$ in $\mathbb{C}$, and using Cauchy's integral theorem. Since $z^{\alpha} = e^{\alpha\ln(z)} = e^{\alpha(\ln(z) + i\arg(z))}$, $z^{\alpha}$ has a branch cut. In what follows, we consider the principal branch of the logarithm, which corresponds to $-\pi < \arg(z) \le \pi$; in particular, $(-1)^\alpha = e^{i\pi\alpha}$, unless the branch cut is crossed. The branch choice determines also how we choose the contours. In Figure \ref{f:contour}, we have depicted one such contour $C$ for $x>0$, which consists of four parts, avoids the branch cut, but encloses the poles $z = i - x$ and $z = x - i$, for every $x$. Then, by the residue theorem, the integral along it is equal to the sum of the residues. The pieces of the contour that run parallel to the branch cut will give the approximation of the integral from $0$ to $\infty$; the other pieces will give integrals that tend to zero, when $C$ tends to one contour that encloses $\mathbb{C}$, except for the brach cut. More precisely, for every $x\in\mathbb{R}$, we take $R>0$, such that $R\gg (1+|x|^2)^{1/2}$, and $r>0$, such that $r\ll (1+|x|^2)^{1/2}$, for instance, $r=1/R$. We also take $\delta>0$, such that, with $\theta_1\in(-\pi,-\pi/2)$ and $\theta_2\in(\pi/2,\pi)$ fixed, $\delta=r\sin\theta_2$. Then, we define
$$
C = C_1 \cup C_R\cup C_2\cup C_r,
$$

\noindent with
\begin{align*}
  C_1 & = \{ -y-i\delta : y\in (-r\cos\theta_1,-R\cos\theta_1)\},\\
  C_R & = \{Re^{\theta i}: \ \theta \in (\theta_1,\theta_2)\},\\
  C_2 & = \{ y+i\delta : y\in (R\cos\theta_2,r\cos\theta_2)\},\\
  C_r & = \{ re^{-\theta i}: \ \theta \in (-\theta_2,-\theta_1)\}.
\end{align*}

\noindent Here, we have omitted the dependecy on $x$ for simplicity of notation. Now, by Cauchy's residue theorem, we have:
\begin{align}
\label{e:intgg}
\int_C g_{n}(z;x)dz & = \int_{C_1}g_{n}(z;x)dz + \int_{C_r}g_{n}(z;x)dz + \int_{C_2}g_{n}(z;x)dz + \int_{C_R}g_{n}(z;x)dz
	\cr
& = 2\pi i [\Res(g_{n}(z;x),i-x) + \Res(g_{n}(z;x),x-i)].
\end{align}

\begin{figure}[!htbp]
	\centering
	\includegraphics[width=0.5\textwidth, clip=true]{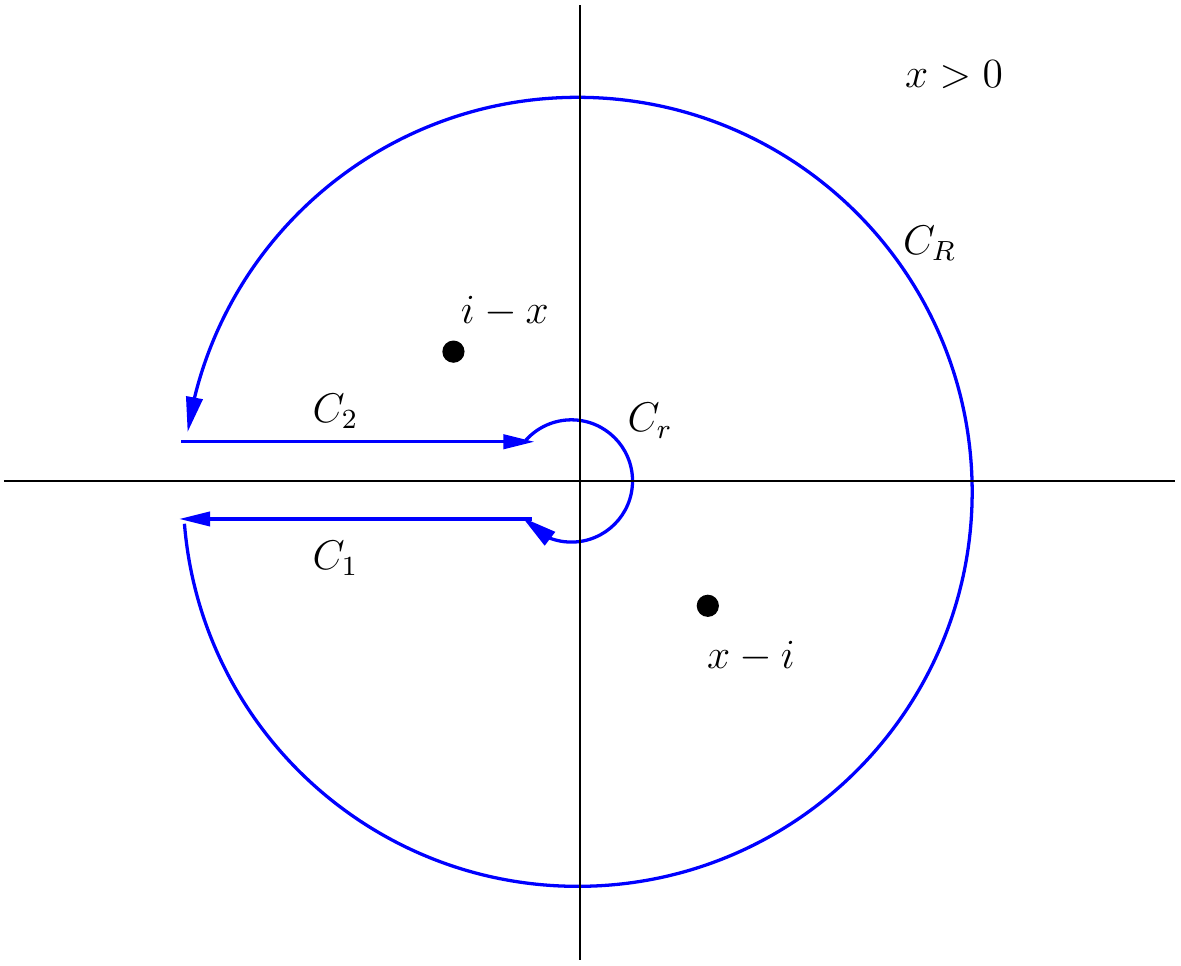}
	\caption{An example of integration contour for $x>0$.}
	\label{f:contour}	
\end{figure}

\noindent In order to compute the residues of $g_{n}(z;x)$ at $z = i - x$ and $z = x - i$, we use the general Leibniz rule:
$$
\frac{d^n}{dz^n}(p(z)q(z)) = \sum_{k=0}^{n}\binom{n}{k}p^{(k)}(z)q^{(n-k)}(z).
$$

\noindent In this way, we obtain
\begin{align}
\label{e:resi-x}
\Res(g_{n}(z;x),i-x) & = \left.\frac{1}{n!}\frac{d^n}{dz^n}\left(\frac{(z+x+i)^{n-1}(z-x+i)^{n+1} - (z-x-i)^{n-1}(z+x-i)^{n+1}}{z^{\alpha}(z-x+i)^{n+1}}\right)\right|_{z = i-x}
\cr
& = \left.\frac{1}{n!}\frac{d^n}{dz^n}z^{-\alpha}(z+x+i)^{n-1}\right|_{z = i-x}
\cr
& = \left.\frac{1}{n!}\sum_{k = 1}^n\binom{n}{k}\left[k!\binom{-\alpha}{k}z^{-\alpha-k}\right]\left[\frac{(n-1)!}{(n-1-(n-k))!}(z+x+i)^{n-1-(n-k)}\right]\right|_{z = i-x}
\cr
& = \sum_{k = 1}^n\binom{n-1}{k-1}\binom{-\alpha}{k}(i-x)^{-\alpha-k}(2i)^{k-1}.
\end{align}

\noindent Likewise, we have
\begin{equation}
\label{e:resx-i}
\Res(g_{n}(z;x),x-i) = -\sum_{k = 1}^n\binom{n-1}{k-1}\binom{-\alpha}{k}(x-i)^{-\alpha-k}(-2i)^{k-1}.
\end{equation}

\noindent In order to compute \eqref{e:intgg}, we observe that the first and third integrals tend to zero, as $R$ tends to infinity (this can be easily seen by changing to polar coordinates and recalling that $r=1/R$). In what regards the first and third integrals, we have that $C_1$ tends to $(-\infty,0)$, parameterized by $-y$, with $y\in(0,\infty)$, and $C_2$ tends to $(-\infty,0)$, parameterized by $y$, with $y\in(-\infty,0)$. We observe that $g_n(-y;x)=(-1)^{1-\alpha} g_n(y;x)$, where the argument of $-1$ is determined by the curve $C_i$ before taking the limit $R\to \infty$. Thus, it is $-\pi$ on $C_1$, and $\pi$ on $C_2$. Then,
\begin{align*}
  \lim_{R\to \infty}\int_{C_1}g_{n}(z;x)dz=-\int_{0}^\infty g_{n}(-y;x)dy = e^{i\pi\alpha}\int_{0}^\infty g_n(y;x) dy,
\end{align*}
and
\begin{align*}
  \lim_{R\to \infty}\int_{C_2}g_{n}(z;x)dz & = \int_{-\infty}^0 g_{n}(y;x)dy=
  \int_0^\infty g_{n}(-y;x)dy=-e^{-i\pi\alpha}\int_{0}^\infty g_n(y;x) dy.
\end{align*}

\noindent Therefore,
$$
\int_C g_{n}(z;x)dz = (e^{i\pi \alpha} - e^{-i\pi \alpha})\int_0^\infty g_{n}(y;x)dy = 2\pi i[\Res(g_{n}(z;x),i-x) + \Res(g_{n}(z;x),x-i)].
$$

\noindent Finally, after some manipulations, we have
\begin{align*}
\int_0^\infty g_{n}(y;x)dy & = \frac{2\pi i}{2i\sin(\pi\alpha)}[\Res(g_{n}(z;x),i-x) + \Res(g_{n}(z;x),x-i)]
\\
& = \frac{\pi}{\sin(\pi\alpha)}\sum_{k = 1}^n\binom{n-1}{k-1}\binom{-\alpha}{k}[(i-x)^{-\alpha-k}(2i)^{k-1} - (x-i)^{-\alpha-k}(-2i)^{k-1}]
\\
& = -\,\frac{\pi((-1)^{-\alpha} + 1)}{\sin(\pi\alpha)}\sum_{k = 1}^n\binom{n-1}{k-1}\binom{-\alpha}{k}(x-i)^{-\alpha-k}(-2i)^{k-1}
\\
& = -\,\frac{\pi(e^{-i\pi\alpha} + 1)}{\sin(\pi\alpha)(x-i)^{1+\alpha}}\sum_{k = 0}^{n-1}\binom{n-1}{k}\binom{-\alpha}{k+1}\left(\frac{-2i}{x-i}\right)^k
\\
& = \frac{\pi \alpha e^{-i\pi\alpha/2} (i)^{1+\alpha}}{\sin(\pi\alpha/2)(ix+1)^{1+\alpha}}{}_2F_1\left(1-n, 1+\alpha, 2, \frac{2}{ix + 1}\right),
\end{align*}

\noindent where we have used (\ref{hyper:2}). Substituting this last expression into \eqref{e:fraclapln1}, and using Euler's reflection formula,
$$
\Gamma(w) \Gamma(1 - w) = \frac{\pi}{\sin(\pi w)},
$$
\noindent at $w = \alpha/2$, we obtain
\begin{equation}
\label{e:lnpos}
(-\Delta)^{\alpha/2}\lambda_n(x) = 2ni\frac{2^{\alpha-1}\Gamma(1/2+\alpha/2)}{\sqrt{\pi}\Gamma(1-\alpha/2)}\frac{i\alpha\Gamma(\alpha/2)\Gamma(1-\alpha/2)}{(ix + 1)^{1+\alpha}}{}_2F_1\left(1-n, 1+\alpha, 2, \frac{2}{ix+1}\right).
\end{equation}

\noindent Then, \eqref{e:fraclapln} for $n\in\mathbb{N}$ follows from \eqref{e:lnpos}, using Legendre's duplication formula $\Gamma(w)\Gamma(w + 1/2) = 2^{1-2w}\sqrt\pi\Gamma(2w)$, at $w = \alpha/2$. In order to finish the proof, we notice that the case where $n$ is a negative integer follows from the symmetry
\begin{equation}
\label{e:lnpm}
\lambda_{n}(x) = \overline{\lambda_{-n}(x)}, \quad n\in\mathbb N.
\end{equation}

\end{proof}

\section{Fractional Laplacian of other families of functions}

\label{s:fraclapmore}

In this section, we compute the fractional Laplacian of the families of functions defined by \eqref{e:mun}-\eqref{e:SC}. Let us obtain first the Fractional Laplacian of the complex Christov Functions:
\begin{proposition}
Let $\mu_n(x)$ be defined as in \eqref{e:mun}. Then,
\begin{equation}
  \label{e:fraclapmun}
(-\Delta)^{\alpha/2}\mu_n(x) =
\begin{cases}
\dfrac{\Gamma(1+\alpha)}{(ix+1)^{1+\alpha}}{}_2F_1\left(-n,1+\alpha; 1;\dfrac{2}{ix+1}\right), & n = 0, 1, 2, \ldots,
	\\[1em]
-\dfrac{\Gamma(1+\alpha)}{(-ix+1)^{1+\alpha}}{}_2F_1\left(1+n,1+\alpha; 1;\dfrac{2}{-ix+1}\right), & n = -1, -2, -3, \ldots
\end{cases}
\end{equation}

\noindent Moreover, the expression for $n=0$ reduces to 
$$
(-\Delta)^{\alpha/2}\mu_{0}(x) = \frac{\Gamma(1+\alpha)}{(ix+1)^{1+\alpha}}.
$$
\end{proposition}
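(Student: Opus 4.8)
The plan is to express each $\mu_n(x)$ as a linear combination of the Higgins functions $\lambda_j(x)$ whose fractional Laplacian is already known from Theorem~\ref{t:theo1}, and then simplify. Writing $\mu_n(x) = (ix-1)^n/(ix+1)^{n+1}$, the key algebraic observation is that
$$
\mu_n(x) = \frac{1}{2i}\left[\lambda_{n+1}(x) - \lambda_n(x)\right]\cdot\frac{1}{?}
$$
-- more precisely, since $\lambda_{n+1}(x) - \lambda_n(x) = \lambda_n(x)\left(\frac{ix-1}{ix+1} - 1\right) = \lambda_n(x)\cdot\frac{-2}{ix+1} = -2\,\mu_n(x)$, we obtain the clean identity $\mu_n(x) = \tfrac12\left(\lambda_n(x) - \lambda_{n+1}(x)\right)$ for all $n\in\mathbb Z$. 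Applying linearity of the fractional Laplacian and \eqref{e:fraclapln}, the case $n\ge 1$ becomes
$$
(-\Delta)^{\alpha/2}\mu_n(x) = \tfrac12\left[(-\Delta)^{\alpha/2}\lambda_n(x) - (-\Delta)^{\alpha/2}\lambda_{n+1}(x)\right],
$$
which is $\Gamma(1+\alpha)(ix+1)^{-1-\alpha}$ times a combination of two ${}_2F_1(1-n,1+\alpha;2;\cdot)$ and ${}_2F_1(-n,1+\alpha;2;\cdot)$ terms weighted by $n$ and $n+1$.

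The main work is then a hypergeometric contiguity computation: I would use the two representations in \eqref{hyper:2} together with \eqref{hyper:1} to show that
$$
\frac{n}{m+1}\sum_{k=0}^{n-1}\binom{n}{k+1}\binom{-1-\alpha}{k}z^k \;-\;\text{(the }n+1\text{ term)}
$$
collapses to $\sum_{k=0}^{n}\binom{n}{k}\binom{-1-\alpha}{k}z^k = {}_2F_1(-n,1+\alpha;1;z)$, with $z = 2/(ix+1)$. Concretely, from \eqref{hyper:2} one has $|n|\,{}_2F_1(1-|n|,1+\alpha;2;z) = \sum_{k=0}^{|n|-1}\binom{|n|}{k+1}\binom{-1-\alpha}{k}z^k$; writing the $\lambda_n$ contribution as $-2|n|\Gamma(1+\alpha)(ix+1)^{-1-\alpha}$ times this finite sum (for $n\ge1$), the difference of the $n$ and $n+1$ sums telescopes binomially via $\binom{n+1}{k+1} - \binom{n}{k+1} = \binom{n}{k}$, which is exactly the Pascal identity that converts the ${}_2F_1(\cdot;2;\cdot)$ form into the ${}_2F_1(\cdot;1;\cdot)$ form. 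I expect this binomial bookkeeping — keeping track of the overall factor of $2$, the sign, and the shift in the summation range at the endpoint $k=n$ — to be the only real obstacle; it is routine but must be done carefully.

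For the case $n\le -1$, I would invoke the conjugation symmetry: from \eqref{e:lnpm}, $\overline{\mu_n(x)} = \overline{(ix-1)^n}/\overline{(ix+1)^{n+1}} = (-ix-1)^n/(-ix+1)^{n+1}$. Comparing with $\mu_{-n-1}(x) = (ix-1)^{-n-1}/(ix+1)^{-n}$ after multiplying numerator and denominator by $(-1)$ appropriately, one checks $\mu_{-n-1}(x) = -\overline{\mu_n(x)}$ for $n\ge 0$. Since the fractional Laplacian commutes with complex conjugation (the kernel in \eqref{e:fraclapl} is real), the formula for negative index follows by conjugating the $n\ge 0$ result and using $\overline{{}_2F_1(-n,1+\alpha;1;2/(ix+1))} = {}_2F_1(-n,1+\alpha;1;2/(-ix+1))$ together with $\overline{(ix+1)^{1+\alpha}} = (-ix+1)^{1+\alpha}$ (valid since $\alpha$ is real and the branch is not crossed), picking up the overall minus sign. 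Finally, the reduction at $n=0$ is immediate: ${}_2F_1(0,1+\alpha;1;z)=1$ by \eqref{e:2F1}, so $(-\Delta)^{\alpha/2}\mu_0(x) = \Gamma(1+\alpha)(ix+1)^{-1-\alpha}$.
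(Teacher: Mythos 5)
Your proposal is correct and follows essentially the same route as the paper: the identity $\mu_n=\tfrac12(\lambda_n-\lambda_{n+1})$, linearity together with Theorem~\ref{t:theo1}, the Pascal identity $\binom{n+1}{k+1}-\binom{n}{k+1}=\binom{n}{k}$ to convert the difference of the two $c=2$ sums into the $c=1$ sum of \eqref{hyper:1}, and the conjugation symmetry $\mu_n=-\overline{\mu_{-1-n}}$ for negative indices. The binomial bookkeeping you flag as the only obstacle works out exactly as you predict (the endpoint term $k=n$ is supplied entirely by the $\lambda_{n+1}$ sum via $\binom{n+1}{n+1}=\binom{n}{n}$), so no gap remains.
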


\begin{proof}

First, we note that $\mu_n(x)$ can be expressed in terms of $\lambda_n(x)$ as follows:
$$
\mu_n(x) = \lambda_n(x)\frac{1}{(ix+1)} =\frac{1}{2}\lambda_n(x)\frac{(ix+1)-(ix-1)}{(ix+1)} = \frac{\lambda_n(x) - \lambda_{n+1}(x)}2;
$$

\noindent therefore,
\begin{equation}
\label{e:mulambda}
(-\Delta)^{\alpha/2}\mu_{n}(x) = \frac{(-\Delta)^{\alpha/2}\lambda_{n}(x) - (-\Delta)^{\alpha/2}\lambda_{n+1}(x)}2,
\end{equation}

\noindent so we just have to use \eqref{e:fraclapln} and simplify the resulting expression. First we assume that $n\in\mathbb N$. Substituting \eqref{e:fraclapln} into this last equation, and using that $\binom{n+1}{k+1} = \binom{n}{k+1} + \binom{n}{k}$, we get
\begin{align*}
(-\Delta)^{\alpha/2}\mu_n(x) & = \frac{\Gamma(1+\alpha)}{(ix+1)^{1+\alpha}}\left[\sum_{k = 0}^n\binom{n+1}{k+1}\binom{-1-\alpha}{k}\left(\frac{2}{ix+1}\right)^k - \sum_{k = 0}^{n-1}\binom{n}{k+1}\binom{-1-\alpha}{k}\left(\frac{2}{ix+1}\right)^k\right]
\cr
& = \frac{\Gamma(1+\alpha)}{(ix+1)^{1+\alpha}}\sum_{k = 0}^n\binom{n}{k}\binom{-1-\alpha}{k}\left(\frac{2}{ix+1}\right)^k,
\end{align*}

\noindent which is \eqref{e:fraclapmun}, for $n\in\mathbb N$. On the other hand, when $n = 0$, again from \eqref{e:fraclapln},
$$
(-\Delta)^{\alpha/2}\mu_{0}(x) = -\frac{(-\Delta)^{\alpha/2}\lambda_{1}(x)}2 = \frac{\Gamma(1+\alpha)}{(ix+1)^{1+\alpha}}{}_2F_1\left(0,1+\alpha; 2;\frac{2}{ix+1}\right) = \frac{\Gamma(1+\alpha)}{(ix+1)^{1+\alpha}},
$$

\noindent and \eqref{e:fraclapmun} also holds. Finally, when $n$ is a negative integer, we observe that
\begin{equation}
\label{e:munpm}
\mu_{n}(x) = -\overline{\mu_{-1-n}(x)},
\end{equation}

\noindent so, in that case, we use $(-\Delta)^{\alpha/2}\mu_{n}(x) = -\overline{(-\Delta)^{\alpha/2}\mu_{-1-n}(x)}$, which concludes the proof.

\end{proof}

Before we compute the fractional Laplacian of the cosine-like and sine-like Higgins functions \eqref{e:CH} and \eqref{e:SH}, we first express \eqref{e:fraclapln} as a polynomial on $x$ times a negative power of $(i\sgn(n)x+1)$.
\begin{lemma}
	
\label{p:lemmaln}
	
Let $\lambda_n(x)$ be defined as in \eqref{e:ln}, and $n\in\mathbb Z\backslash\{0\}$. Then, \eqref{e:fraclapln} can be expanded as
\begin{equation}
\label{e:fraclaplnexpan}
(-\Delta)^{\alpha/2}\lambda_n(x) = -\frac{2|n|\Gamma(1+\alpha)}{(i\sgn(n)x+1)^{|n|+\alpha}}\sum_{l=0}^{|n|-1}\binom{|n|-1}{l}(i\sgn(n)x)^{|n|-1-l}{}_2F_1(-l,1+\alpha; 2; 2).
\end{equation}

\end{lemma}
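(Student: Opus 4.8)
The plan is to reduce to the case $n\in\mathbb N$ and then carry out a purely combinatorial rearrangement of the finite hypergeometric sum in \eqref{hyper:2}. Fix $n\in\mathbb N$ and write $m=n-1$; the key observation, which drives the whole argument, is that the coefficients of the hypergeometric polynomial \eqref{hyper:2} factor in a way independent of its degree: since $(-m)_k/k!=(-1)^k\binom mk$ (equivalently $\tfrac1{m+1}\binom{m+1}{k+1}=\tfrac1{k+1}\binom mk$), one has
\begin{equation*}
{}_2F_1(-m,1+\alpha;2;z)=\sum_{k=0}^{m}\binom mk e_k\,z^{k},\qquad e_k:=\frac1{k+1}\binom{-1-\alpha}{k},
\end{equation*}
with $e_k$ \emph{not} depending on $m$; in particular ${}_2F_1(-l,1+\alpha;2;2)=\sum_{k=0}^{l}\binom lk 2^k e_k$ for every $l\ge0$.

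I would then substitute this factored form into \eqref{e:fraclapln}. Writing $1+\alpha+k=(n+\alpha)-(m-k)$ lets me pull the factor $(ix+1)^{-(n+\alpha)}$ out of the sum, so that $(-\Delta)^{\alpha/2}\lambda_n(x)=-\dfrac{2n\Gamma(1+\alpha)}{(ix+1)^{n+\alpha}}\sum_{k=0}^m\binom mk 2^k e_k\,(ix+1)^{m-k}$. Expanding each power $(ix+1)^{m-k}$ by the binomial theorem and interchanging the order of summation turns the sum into $\sum_{j=0}^m(ix)^j\sum_{k=0}^{m-j}\binom mk\binom{m-k}{j}2^k e_k$. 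Now the elementary identity $\binom mk\binom{m-k}{j}=\binom mj\binom{m-j}{k}$ factors the inner sum as $\binom mj\sum_{k=0}^{m-j}\binom{m-j}{k}2^k e_k=\binom mj\,{}_2F_1\bigl(-(m-j),1+\alpha;2;2\bigr)$ by the displayed formula of the first paragraph. Re-indexing $l=m-j$ and recalling $m=n-1$ gives exactly \eqref{e:fraclaplnexpan} for $n\in\mathbb N$.

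Finally, for $n$ a negative integer I would use the symmetry $\lambda_{-|n|}(x)=\overline{\lambda_{|n|}(x)}$ from \eqref{e:lnpm} together with the fact that $(-\Delta)^{\alpha/2}$, having a real kernel, commutes with complex conjugation, so that $(-\Delta)^{\alpha/2}\lambda_n(x)=\overline{(-\Delta)^{\alpha/2}\lambda_{|n|}(x)}$; since $\Gamma(1+\alpha)$, the binomial coefficients and the numbers ${}_2F_1(-l,1+\alpha;2;2)$ are real, and $\overline{(ix+1)^{|n|+\alpha}}=(-ix+1)^{|n|+\alpha}$ because $ix+1$ has real part $1>0$, conjugating the formula just established for $|n|$ reproduces \eqref{e:fraclaplnexpan} with $\sgn(n)=-1$. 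I do not anticipate a real obstacle: the only genuinely substantive point is the degree-independent factorization of the hypergeometric coefficients, which is what decouples the double sum once the binomial identity is applied; everything else is bookkeeping. If one prefers a slicker presentation, the reduced identity $(ix+1)^{n-1}{}_2F_1\bigl(1-n,1+\alpha;2;\tfrac2{ix+1}\bigr)=\sum_{l=0}^{n-1}\binom{n-1}{l}(ix)^{n-1-l}{}_2F_1(-l,1+\alpha;2;2)$ can instead be proved by a generating-function computation in a formal variable $s$: both sides, summed against $s^{n-1}$ over $n\ge1$, equal $\dfrac1{1-(ix+1)s}\sum_{k\ge0}2^k e_k\Bigl(\dfrac{s}{1-(ix+1)s}\Bigr)^k$.
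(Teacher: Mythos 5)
Your argument is correct and is essentially the paper's own proof of Lemma \ref{p:lemmaln}: both expand the finite hypergeometric sum in \eqref{hyper:2}, pull out the factor $(i\sgn(n)x+1)^{-(|n|+\alpha)}$, expand the remaining powers of $(ix+1)$ binomially, interchange the two sums, and recognize the inner sum as ${}_2F_1(-l,1+\alpha;2;2)$, with the negative-$n$ case handled by the conjugation symmetry \eqref{e:lnpm}. Your packaging via the degree-independent coefficients $e_k=\frac{1}{k+1}\binom{-1-\alpha}{k}$ and the trinomial identity $\binom{m}{k}\binom{m-k}{j}=\binom{m}{j}\binom{m-j}{k}$ is just a cleaner bookkeeping of the same index gymnastics the paper performs in \eqref{e:fraclaplnx0}--\eqref{e:fraclaplnx1}.
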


\begin{proof}

Assume first that $n\in\mathbb N$. We express ${}_2F_1$ in \eqref{e:lnpos} as a sum; then, replacing the index $k$ by $n - 1 - k$ in the sum, and expanding $(ix+1)^l$ by Newton's binomial formula, we get
\begin{align}
\label{e:fraclaplnx0}
(-\Delta)^{\alpha/2}\lambda_n(x) & = -\frac{2\Gamma(1+\alpha)}{(ix+1)^{1+\alpha}}\sum_{k=0}^{n-1}\binom{n}{k+1}\binom{-1-\alpha}{k}\left(\frac{2}{ix+1}\right)^k
	\cr
& = -\frac{2^n\Gamma(1+\alpha)}{(ix+1)^{n+\alpha}}\sum_{k=0}^{n-1}\sum_{l=0}^k\frac{1}{2^k}\binom{n}{k}\binom{k}{l}\binom{-1-\alpha}{n-1-k}(ix)^l.
\end{align}

\noindent We now interchange the order of the sums and replace the indices $l$ by $n-1-l$ and $k$ by $n-1-k$; then, after some rewriting, we get,
\begin{align}
\label{e:fraclaplnx1}
(-\Delta)^{\alpha/2}\lambda_n(x) & = -\frac{2^n\Gamma(1+\alpha)}{(ix+1)^{n+\alpha}}\sum_{l=0}^{n-1}(ix)^l\sum_{k=l}^{n-1}\frac{1}{2^k}\binom{n}{k}\binom{k}{l}\binom{-1-\alpha}{n-1-k}
	\cr
& = -\frac{2^n\Gamma(1+\alpha)}{(ix+1)^{n+\alpha}}\sum_{l=0}^{n-1}(ix)^{n-1-l}\sum_{k=0}^{l}\frac{1}{2^{n-1-k}}\binom{n}{n-1-k}\binom{n-1-k}{n-1-l}\binom{-1-\alpha}{k}
	\cr
& = -\frac{2n\Gamma(1+\alpha)}{(ix+1)^{n+\alpha}}\sum_{l=0}^{n-1}\binom{n-1}{l}(ix)^{n-1-l}\frac{1}{l+1}\sum_{k=0}^l\binom{l+1}{k+1}\binom{-1-\alpha}{k}2^k,
\end{align}

\noindent which yields \eqref{e:fraclaplnexpan}. The case with $n$ a negative integer follows from \eqref{e:lnpm}.

\end{proof}

\begin{remark}

If we only consider $l = 0$ in \eqref{e:fraclaplnexpan}, we get the asymptotic behavior of $(-\Delta)^{\alpha/2}\lambda_n(x)$:
\begin{equation*}
(-\Delta)^{\alpha/2}\lambda_n(x) \sim -\frac{2|n|\Gamma(1+\alpha)(i\sgn(n)x)^{|n|-1}}{(i\sgn(n)x+1)^{|n|+\alpha}} \sim -\frac{2|n|\Gamma(1+\alpha)}{(i\sgn(n)x)^{1+\alpha}}, \quad x\to\pm\infty,
\end{equation*}

\noindent i.e., $|(-\Delta)^{\alpha/2}\lambda_n(x)|$ decays at infinity as $|x|^{-1-\alpha}$ for all $n$, whereas
\begin{equation*}
\lambda_n(x) - 1 = \left(1 - \frac{2}{ix + 1}\right)^n - 1 \sim - \frac{2n}{ix + 1} \sim - \frac{2n}{ix}, \quad x\to\pm\infty,
\end{equation*}

\noindent i.e., $|\lambda_n(x) - 1|$ decays at infinity as $|x|^{-1}$, for all $n$. Therefore, the operator $(-\Delta)^{\alpha/2}$ introduces an extra decay of $|x|^{-\alpha}$.

\end{remark}

As a consequence of Lemma \ref{p:lemmaln}, we also have the following result.

\begin{proposition}

\label{c:corolCHSH}

The fractional Laplacian of \eqref{e:CH} and \eqref{e:SH} is given respectively by
\begin{align}
\label{e:fraclapCH}
(-\Delta)^{\alpha/2}\CH_{2n}(x) =
\begin{cases}
0, & n = 0,
	\\[1em]
\displaystyle{-\frac{2\Gamma(1+\alpha)}{(1+x^2)^{(1+\alpha)/2}}\sin\left((1+\alpha)\arccot(x)-\frac{\pi\alpha}2\right)}, & n = 1,
	\\[1em]
\displaystyle{-\frac{2n\Gamma(1+\alpha)}{(1+x^2)^{(n+\alpha)/2}}\sin\left((n+\alpha)\arccot(x)-\frac{\pi\alpha}2\right)}
	\\[1em]
\qquad\displaystyle{\times\sum_{l=0}^{\lfloor\frac{n-1}2\rfloor}(-1)^l\binom{n-1}{2l}x^{n-1-2l}{}_2F_1(-2l,1+\alpha; 2; 2)}
	\\[1em]
\quad\displaystyle{+\frac{2n\Gamma(1+\alpha)}{(1+x^2)^{(n+\alpha)/2}}\cos\left((n+\alpha)\arccot(x)-\frac{\pi\alpha}2\right)}
	\\[1em]
\qquad\displaystyle{\times\sum_{l=0}^{\lfloor\frac{n-2}2\rfloor}(-1)^l\binom{n-1}{2l+1} x^{n-2-2l}{}_2F_1(-2l-1,1+\alpha; 2; 2)}, & n = 2, 3, 4, \ldots,
\end{cases}
\end{align}

\noindent and

\begin{align}
\label{e:fraclapSH}
  (-\Delta)^{\alpha/2}\SH_{2n+1}(x) =
\begin{cases}
\displaystyle{\frac{2\Gamma(1+\alpha)}{(1+x^2)^{(1+\alpha)/2}}\cos\left((1+\alpha)\arccot(x)-\frac{\pi\alpha}2\right)}, & n  = 0,
\\[1em]
\displaystyle{\frac{2(n+1)\Gamma(1+\alpha)}{(1+x^2)^{(n+1+\alpha)/2}}\cos\left((n+1+\alpha)\arccot(x)-\frac{\pi\alpha}2\right)}
\\[1em]
\qquad\displaystyle{\times\sum_{l=0}^{\lfloor\frac{n}2\rfloor}(-1)^l\binom{n}{2l}x^{n-2l}{}_2F_1(-2l,1+\alpha; 2; 2)}
\\[1em]
\quad\displaystyle{+\frac{2n\Gamma(1+\alpha)}{(1+x^2)^{(n+1+\alpha)/2}}\sin\left((n+1+\alpha)\arccot(x)-\frac{\pi\alpha}2\right)}
\\[1em]
\qquad\displaystyle{\times\sum_{l=0}^{\lfloor\frac{n-1}2\rfloor}(-1)^l\binom{n}{2l+1} x^{n-1-2l}{}_2F_1(-2l-1,1+\alpha; 2; 2)}, & n = 1, 2, 3, \ldots.
\end{cases}
\end{align}
  
\end{proposition}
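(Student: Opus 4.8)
The plan is to derive \eqref{e:fraclapCH} and \eqref{e:fraclapSH} directly from Lemma~\ref{p:lemmaln} by combining the two conjugate pieces according to the definitions \eqref{e:CH} and \eqref{e:SH}. First I would observe that, by \eqref{e:lnpm}, we have $(-\Delta)^{\alpha/2}\lambda_{-n}(x) = \overline{(-\Delta)^{\alpha/2}\lambda_n(x)}$ for $n\in\mathbb N$, so that $(-\Delta)^{\alpha/2}\CH_{2n}(x) = \operatorname{Re}\bigl((-\Delta)^{\alpha/2}\lambda_n(x)\bigr)$ and $(-\Delta)^{\alpha/2}\SH_{2n+1}(x) = \operatorname{Im}\bigl((-\Delta)^{\alpha/2}\lambda_{n+1}(x)\bigr)$. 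The task then reduces to taking the real and imaginary parts of the right-hand side of \eqref{e:fraclaplnexpan} for $n\in\mathbb N$.

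The key computation is the polar-form rewriting of the prefactor $(ix+1)^{-(n+\alpha)}$. I would write $ix+1 = \sqrt{1+x^2}\,e^{i\operatorname{arccot}(x)}$ (valid for real $x$, using the convention $\operatorname{arccot}(x)\in(0,\pi)$ that matches the branch choices in the paper), so that
$$
\frac{1}{(ix+1)^{n+\alpha}} = \frac{e^{-i(n+\alpha)\operatorname{arccot}(x)}}{(1+x^2)^{(n+\alpha)/2}}.
$$
Then $\Gamma(1+\alpha)$ being real, and the factor $-2n\Gamma(1+\alpha)$ real, the whole expression \eqref{e:fraclaplnexpan} is a real constant times $e^{-i(n+\alpha)\operatorname{arccot}(x)}$ times the polynomial $\sum_{l=0}^{n-1}\binom{n-1}{l}(ix)^{n-1-l}{}_2F_1(-l,1+\alpha;2;2)$. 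The remaining step is to split that polynomial into the contributions with $n-1-l$ even and odd: writing $(ix)^{n-1-l}= i^{n-1-l}x^{n-1-l}$, the even powers of $i$ contribute real numbers and the odd powers contribute purely imaginary ones, and reindexing $l$ appropriately (so that $n-1-l$ runs over even values $2m$ or odd values $2m+1$) produces exactly the two polynomial sums $\sum_l(-1)^l\binom{n-1}{2l}x^{\,\cdot\,}{}_2F_1(-2l,\dots)$ and $\sum_l(-1)^l\binom{n-1}{2l+1}x^{\,\cdot\,}{}_2F_1(-2l-1,\dots)$ appearing in the statement, with the upper limits $\lfloor(n-1)/2\rfloor$ and $\lfloor(n-2)/2\rfloor$ following from parity. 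Multiplying $e^{-i(n+\alpha)\operatorname{arccot}(x)}$ by the resulting $A(x)+iB(x)$ and using $e^{-i\theta}=\cos\theta-i\sin\theta$ with $\theta=(n+\alpha)\operatorname{arccot}(x)$, together with the extra factor $i$ coming from an odd power, gives the $\sin$ and $\cos$ combinations. To match the stated form with the phase shift $-\pi\alpha/2$, I would track where a factor $e^{-i\pi\alpha/2}$ or $i^{1+\alpha}=e^{i\pi(1+\alpha)/2}$ originates: it is already present implicitly through the simplification $\Gamma(\alpha/2)\Gamma(1-\alpha/2)$ and the powers of $i$ in \eqref{e:lnpos}, but since Lemma~\ref{p:lemmaln} has already absorbed everything into the real constant $-2n\Gamma(1+\alpha)$ and the factor $(i\sgn(n)x+1)^{-(n+\alpha)}$, I should instead revisit the derivation and notice that the correct polar identity carrying the phase is $(ix+1)^{-(1+\alpha)} = i^{-(1+\alpha)}(x-i)^{-(1+\alpha)}$; consistently I would just express $1/(ix+1)^{n+\alpha}$ as above but verify the phase on a test value such as $x=0$, where $\operatorname{arccot}(0)=\pi/2$ and $(i\cdot 0+1)^{-(n+\alpha)}=1$, forcing the argument of the trigonometric functions to reduce to $-\pi\alpha/2$ modulo the polynomial's own contribution; this fixes the phase unambiguously. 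The cases $n=1$ (where the polynomial is the single term ${}_2F_1(0,1+\alpha;2;2)=1$) and $n=0$ (trivially zero) are then read off as special instances, and $\SH_{2n+1}$ follows identically from the imaginary part, shifting $n\mapsto n+1$.

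The main obstacle I anticipate is purely bookkeeping: getting the reindexing of the double-parity split exactly right so that the binomial coefficients, the signs $(-1)^l$, the powers of $x$, and the floor-function upper limits all line up with the stated formulas, and simultaneously keeping the phase $-\pi\alpha/2$ correctly attached rather than, say, $+\pi\alpha/2$ or $-\pi(n+\alpha)/2$. There is no conceptual difficulty — it is an application of $\operatorname{Re}$ and $\operatorname{Im}$ to a product of a known polar-form exponential and a known polynomial — but the sign conventions for $\operatorname{arccot}$ and for the branch of the power must be handled with care, and I would double-check the final expressions at $x=0$ and in the asymptotic regime $x\to\pm\infty$ (using the Remark's leading-order behavior) as consistency checks before declaring the proof complete.
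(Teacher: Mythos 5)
Your route is exactly the paper's: use \eqref{e:lnpm} to write $(-\Delta)^{\alpha/2}\CH_{2n}=\Re\bigl((-\Delta)^{\alpha/2}\lambda_n\bigr)$ and $(-\Delta)^{\alpha/2}\SH_{2n+1}=\Im\bigl((-\Delta)^{\alpha/2}\lambda_{n+1}\bigr)$, put the expansion of Lemma \ref{p:lemmaln} in polar form, and split the sum over $l$ by parity. The one computation you actually commit to, however, is wrong, and not harmlessly so. The identity $ix+1=\sqrt{1+x^2}\,e^{i\arccot(x)}$ fails (at $x=0$ the left side is $1$ and the right side is $i$): the argument of $ix+1$ is $\arctan(x)=\pi/2-\arccot(x)$, not $\arccot(x)$. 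The correct decomposition, and the one the paper uses, is $ix+1=i(x-i)$ with $x-i=\sqrt{1+x^2}\,e^{-i\arccot(x)}$ for $\arccot(x)\in[0,\pi]$, which gives
\begin{equation*}
\frac{(ix)^{n-1-l}}{(ix+1)^{n+\alpha}}=\frac{x^{n-1-l}}{(1+x^2)^{(n+\alpha)/2}}\,e^{i\left((n+\alpha)\arccot(x)-\pi(l+1+\alpha)/2\right)},
\end{equation*}
i.e., precisely \eqref{e:fraclaplnexp}.

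The error matters beyond the sign of the $\arccot$ term. In your version the only powers of $i$ come from $(ix)^{n-1-l}$, so your real/imaginary split is governed by the parity of $n-1-l$; after reindexing you would obtain sums containing ${}_2F_1\bigl(-(n-1-2m),1+\alpha;2;2\bigr)$, which is structurally different from the ${}_2F_1(-2l,\ldots)$ and ${}_2F_1(-2l-1,\ldots)$ in the statement. With the correct identity, the factor $i^{-(n+\alpha)}$ from the prefactor combines with $i^{n-1-l}$ to give $i^{-(l+1+\alpha)}$, so the phase depends on $l$ itself and the parity split is on $l$, producing exactly the stated sums with the signs $(-1)^l$ and the floor-function limits. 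Your proposed patch of fixing the phase by a single check at $x=0$ would flag a mismatch but cannot repair this structural defect, nor does it determine the sign of the $x$-dependent part of the phase for all $x$. You do mention the correct factorization $(ix+1)^{-(1+\alpha)}=i^{-(1+\alpha)}(x-i)^{-(1+\alpha)}$ parenthetically; commit to it from the outset. Once that is done, the remaining steps you describe --- the special cases $n=0$ and $n=1$, the shift $n\mapsto n+1$ for $\SH_{2n+1}$, and the reindexing --- go through and coincide with the paper's proof.
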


\begin{proof}
	
  From the definition of \eqref{e:CH} and \eqref{e:SH}, together with \eqref{e:lnpm} and Euler's formula $e^{is} = \cos(s) + i\sin(s)$, we have immediately, for $n=0$, $1$, $\ldots$,
$$
  \CH_{2n}(x)  = \Re(\lambda_n(x)),
  \quad \SH_{2n}(x)  = \Im(\lambda_{n+1}(x)),
$$
and this implies (see also \cite{boyd1990}) that
\begin{align*}
 \CH_{2n}(\cot(s)) = \cos(2ns), \quad \SH_{2n+1}(\cot(s)) = \sin(2(n+1)s).
\end{align*}

\noindent Since $\CH_{2n}(x)$ and $\SH_{2n}(x)$ are real, their fractional Laplacian is also real. We thus want to obtain expressions of $(-\Delta)^{\alpha/2}\CH_{2n}(x)$ and $(-\Delta)^{\alpha/2}\SH_{2n}(x)$ that avoid the use of complex numbers. To do this, we express all the complex numbers in \eqref{e:fraclaplnx1} in their polar form: e.g., $i^\alpha = e^{i\pi\alpha/2}$,
$$
\frac{1}{(x-i)^{n+\alpha}} =\frac{(x+i)^{n+\alpha}}{(1+x^2)^{n+\alpha}}=
\frac{e^{i(n+\alpha)\arccot(x)}}{(1+x^2)^{(n+\alpha)/2}},
$$

\noindent etc. Note that, since $x = \cot(s)$, with $s\in[0,\pi]$, we have chosen the definitions of the arctangent and the arccotangent such that $\arctan(1/x) = \arccot(x)\in[0, \pi]$ in the last expression. Then, \eqref{e:fraclaplnx1} becomes
\begin{equation}
\label{e:fraclaplnexp}
(-\Delta)^{\alpha/2}\lambda_n(x) = -\frac{2n\Gamma(1+\alpha)}{(1+x^2)^{(n+\alpha)/2}}\sum_{l=0}^{n-1}\binom{n-1}{l}x^{n-1-l}e^{i((n+\alpha)\arccot(x)-\pi(l+1+\alpha)/2)}{}_2F_1(-l,1+\alpha; 2; 2).
\end{equation}

\noindent With respect to \eqref{e:fraclapCH}, the case $n = 0$ is trivial, because $\CH_0(1) = 1$; otherwise, taking the real part of \eqref{e:fraclaplnexp},
\begin{align*}
(-\Delta)^{\alpha/2}\CH_{2n}(x) & = -\frac{2n\Gamma(1+\alpha)}{(1+x^2)^{(n+\alpha)/2}}\sum_{l=0}^{n-1}\binom{n-1}{l} x^{n-1-l}\sin\left(\left[(n+\alpha)\arccot(x)-\frac{\pi\alpha}2\right]-\frac{\pi l}2\right){}_2F_1(-l,1+\alpha; 2; 2)
	\cr
& = -\frac{2n\Gamma(1+\alpha)}{(1+x^2)^{(n+\alpha)/2}}\sin\left((n+\alpha)\arccot(x)-\frac{\pi\alpha}2\right)\sum_{l=0}^{n-1}\cos\left(\frac{\pi l}2\right)\binom{n-1}{l}x^{n-1-l}{}_2F_1(-l,1+\alpha; 2; 2)
\cr
& \phantom{{}={}} +\frac{2n\Gamma(1+\alpha)}{(1+x^2)^{(n+\alpha)/2}}\cos\left((n+\alpha)\arccot(x)-\frac{\pi\alpha}2\right)\sum_{l=0}^{n-1}\sin\left(\frac{\pi l}2\right)\binom{n-1}{l} x^{n-1-l}{}_2F_1(-l,1+\alpha; 2; 2).
\end{align*}

\noindent When $n = 1$, the first sum is equal to one, and the second one is equal to zero. For the other values of $n$, observe that $\cos(\pi l/2)$ is zero if and only if $l$ is odd, whereas $\sin(\pi l/2)$ is zero, if and only if $l$ is even. Therefore, substituting $l$ by $2l$ in the first term, and by $2l+1$ in the second term, the proof of \eqref{e:fraclapCH} is concluded.

Likewise, taking the imaginary part of \eqref{e:fraclaplnexp}, and replacing $n$ by $n + 1$:
\begin{align*}
(-\Delta)^{\alpha/2}\SH_{2n+1}(x) & = \frac{2(n+1)\Gamma(1+\alpha)}{(1+x^2)^{(n+1+\alpha)/2}}\sum_{l=0}^{n}\binom{n}{l} x^{n-l}\cos\left(\left[(n+1+\alpha)\arccot(x)-\frac{\pi\alpha}2\right]-\frac{\pi l}2\right){}_2F_1(-l,1+\alpha; 2; 2)
\cr
& = \frac{2(n+1)\Gamma(1+\alpha)}{(1+x^2)^{(n+1+\alpha)/2}}\cos\left((n+1+\alpha)\arccot(x)-\frac{\pi\alpha}2\right)\sum_{l=0}^{n}\cos\left(\frac{\pi l}2\right)\binom{n}{l}x^{n-l}{}_2F_1(-l,1+\alpha; 2; 2)
\cr
& \phantom{{}={}} +\frac{2(n+1)\Gamma(1+\alpha)}{(1+x^2)^{(n+1+\alpha)/2}}\sin\left((n+1+\alpha)\arccot(x)-\frac{\pi\alpha}2\right)\sum_{l=0}^{n}\sin\left(\frac{\pi l}2\right)\binom{n}{l} x^{n-l}{}_2F_1(-l,1+\alpha; 2; 2).
\end{align*}

\noindent When $n = 0$, the first sum is equal to one, and the second one is equal to zero. For the other values of $n$, we substitute again $l$ by $2l$ in the first term, and by $2l+1$ in the second term, to concluded the proof of \eqref{e:fraclapSH}.

\end{proof}

As before, in order to compute the fractional Laplacian of the cosine-like and sine-like Christov functions, we first express \eqref{e:fraclapmun} as a polynomial on $x$ multiplied by a rational function as follows:

\begin{lemma}
	
\label{p:lemmamun}
	
Let $\mu_n(x)$ be defined as in \eqref{e:mun}. Then, \eqref{e:fraclapmun} can be expanded as
\begin{equation}\label{e:fraclapmux}
(-\Delta)^{\alpha/2}\mu_n(x) =
\begin{cases}
\displaystyle{\frac{\Gamma(1+\alpha)}{(ix+1)^{n+1+\alpha}}\sum_{l = 0}^n\binom{n}{l}(ix)^{n-l}{}_2F_1(-l, 1 + \alpha; 1; 2)}, & n = 0, 1, \ldots,
	\\[1em]
\displaystyle{-\frac{\Gamma(1+\alpha)}{(-ix+1)^{-n+\alpha}}\sum_{l = 0}^{-1-n}\binom{-1-n}{l}(-ix)^{-1-n-l}{}_2F_1(-l, 1 + \alpha; 1; 2)}, & n = -1, -2, \ldots.	
\end{cases}
\end{equation}

\end{lemma}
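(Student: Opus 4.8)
The plan is to start directly from the closed form \eqref{e:fraclapmun} proved in the previous proposition, insert the finite-sum representation \eqref{hyper:1} of ${}_2F_1(-n,1+\alpha;1;\cdot)$, and then factor out the highest power of $(ix+1)$ appearing in the denominator so as to replace the argument $2/(ix+1)$ by the constant $2$; expanding the leftover powers $(ix+1)^{n-k}$ by Newton's binomial formula produces a double sum in which, after re-indexing, each power of $ix$ is multiplied precisely by a factor ${}_2F_1(-l,1+\alpha;1;2)$.

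Concretely, for $n\in\{0,1,2,\dots\}$ I would write
$$
(-\Delta)^{\alpha/2}\mu_n(x) = \frac{\Gamma(1+\alpha)}{(ix+1)^{1+\alpha}}\sum_{k=0}^n\binom{n}{k}\binom{-1-\alpha}{k}\frac{2^k}{(ix+1)^k}
= \frac{\Gamma(1+\alpha)}{(ix+1)^{n+1+\alpha}}\sum_{k=0}^n\binom{n}{k}\binom{-1-\alpha}{k}2^k(ix+1)^{n-k},
$$
then expand $(ix+1)^{n-k}=\sum_{j=0}^{n-k}\binom{n-k}{j}(ix)^j$, interchange the order of summation, and collect the coefficient of $(ix)^j$. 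The identity to be used at this point is the trinomial revision $\binom{n}{k}\binom{n-k}{j}=\binom{n}{j}\binom{n-j}{k}$ (both sides equal $n!/(k!\,j!\,(n-k-j)!)$); with it, the coefficient of $(ix)^{n-l}$ becomes $\binom{n}{l}\sum_{k=0}^{l}\binom{l}{k}\binom{-1-\alpha}{k}2^k$, and the inner sum is exactly ${}_2F_1(-l,1+\alpha;1;2)$ by \eqref{hyper:1}. This gives the first branch of \eqref{e:fraclapmux}; the case $n=0$ is the trivial reduction already recorded in the proposition.

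For negative $n$ I would invoke the symmetry \eqref{e:munpm}, i.e.\ $(-\Delta)^{\alpha/2}\mu_n(x)=-\overline{(-\Delta)^{\alpha/2}\mu_{-1-n}(x)}$. Since $x$ and $\alpha$ are real, conjugation sends $ix$ to $-ix$ and $ix+1$ to $-ix+1$; because $ix+1$ lies in the open right half-plane, the principal branch of $(ix+1)^{n+1+\alpha}$ is analytic there and conjugates to $(-ix+1)^{n+1+\alpha}$ without crossing the cut, while each ${}_2F_1(-l,1+\alpha;1;2)$ is a real number. Applying this to the first branch with $-1-n$ in place of $n$, and using $-1-n+1+\alpha=-n+\alpha$, yields the second branch of \eqref{e:fraclapmux}.

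The only genuinely non-mechanical step is the re-indexing together with the trinomial identity; everything else is bookkeeping. I do not anticipate a real obstacle, but some care is needed to justify the conjugation in the negative-$n$ case, which is legitimate precisely because $\Re(ix+1)=1>0$, so the branch cut of the power function is never met.
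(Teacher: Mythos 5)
Your proposal is correct and follows essentially the same route as the paper: insert the finite-sum form \eqref{hyper:1} into \eqref{e:fraclapmun}, pull out $(ix+1)^{-(n+1+\alpha)}$, expand the remaining powers of $(ix+1)$ by the binomial theorem, interchange sums (the paper performs the trinomial revision implicitly through two index reversals rather than quoting it by name), and recognize the inner sum as ${}_2F_1(-l,1+\alpha;1;2)$, with the negative-$n$ branch obtained from the conjugation symmetry \eqref{e:munpm} exactly as you describe. Your explicit justification of the conjugation step via $\Re(ix+1)=1>0$ is a detail the paper leaves tacit, but it is the same argument.
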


\begin{proof}

Assume $n$ is a nonnegative number. The proof is almost identical to that of Proposition \ref{p:lemmaln}. Indeed, starting from the second last line in \eqref{e:fraclapmun} and following the same steps as in \eqref{e:fraclaplnx0} and \eqref{e:fraclaplnx1},
\begin{align*}
(-\Delta)^{\alpha/2}\mu_n(x) & = \frac{2^n\Gamma(1+\alpha)}{(ix+1)^{n+1+\alpha}}\sum_{k = 0}^n\sum_{l=0}^{k}\frac{1}{2^k}\binom{n}{k}\binom{k}{l}\binom{-1-\alpha}{n-k}(ix)^{l}
	\cr
& = \frac{\Gamma(1+\alpha)}{(ix+1)^{n+1+\alpha}}\sum_{l = 0}^n\binom{n}{l}(ix)^{n-l}\sum_{k=0}^{l}\binom{l}{k}\binom{-1-\alpha}{k}2^k,
\end{align*}

\noindent which is \eqref{e:fraclapmux}. The case with $n$ a negative integer follows from \eqref{e:munpm}.

\end{proof}

\begin{remark}
	
As we did for $(-\Delta)^{\alpha/2}\lambda_n(x)$, if we only consider $l = 0$ in \eqref{e:fraclapmux}, we get the asymptotic behavior of $(-\Delta)^{\alpha/2}\mu_n(x)$, as $x\to\pm\infty$:
\begin{equation*}
(-\Delta)^{\alpha/2}\mu_n(x) \sim
\begin{cases}
\dfrac{\Gamma(1+\alpha)(ix)^{n}}{(ix+1)^{n+1+\alpha}} \sim \dfrac{\Gamma(1+\alpha)}{(ix)^{1+\alpha}}, & n = 0, 1, \ldots,
\\[1em]
-\dfrac{\Gamma(1+\alpha)(-ix)^{-1-n}}{(-ix+1)^{-n+\alpha}} \sim -\dfrac{\Gamma(1+\alpha)}{(-ix)^{1+\alpha}}, & n = -1, -2, \ldots;
\end{cases}
\end{equation*}

\noindent whereas
\begin{equation*}
\mu_n(x) = \frac12\left[\left(1 - \frac{2}{ix + 1}\right)^n - \left(1 - \frac{2}{ix + 1}\right)^{n+1}\right]\sim \frac{1}{ix + 1} \sim \frac{1}{ix}, \quad x\to\pm\infty.
\end{equation*}

\noindent Therefore, as expected, the operator $(-\Delta)^{\alpha/2}$ introduces an extra decay of $|x|^{-\alpha}$.

\end{remark}

On the other hand, applying now Lemma \ref{p:lemmamun}, we have the following result:

\begin{proposition}

The fractional Laplacian of \eqref{e:CC} and \eqref{e:SC} is given respectively by

\begin{align}
\label{e:fraclapCC}
(-\Delta)^{\alpha/2}\CC_{2n}(x) & = \frac{2n\Gamma(1+\alpha)}{(1+x^2)^{(n+1+\alpha)/2}}\sin\left((n+1+\alpha)\arccot(x)-\frac{\pi\alpha}2\right)
\cr
& \qquad \times\sum_{l=0}^{\lfloor\frac{n}2\rfloor}(-1)^l\binom{n}{2l}x^{n-2l}{}_2F_1(-2l,1+\alpha; 2; 1)
\cr
& \quad -\frac{2n\Gamma(1+\alpha)}{(1+x^2)^{(n+1+\alpha)/2}}\cos\left((n+1+\alpha)\arccot(x)-\frac{\pi\alpha}2\right)
\cr
& \qquad \times\sum_{l=0}^{\lfloor\frac{n-1}2\rfloor}(-1)^l\binom{n}{2l+1} x^{n-2l-1}{}_2F_1(-2l-1,1+\alpha; 2; 1),
\end{align}

\noindent and
\begin{align}
\label{e:fraclapSC}
(-\Delta)^{\alpha/2}\SC_{2n+1}(x) & = \frac{2n\Gamma(1+\alpha)}{(1+x^2)^{(n+1+\alpha)/2}}\cos\left((n+1+\alpha)\arccot(x)-\frac{\pi\alpha}2\right) 
\cr
& \qquad \times \sum_{l=0}^{\lfloor\frac{n}2\rfloor}(-1)^l\binom{n}{2l}x^{n-2l}{}_2F_1(-2l,1+\alpha; 2; 1)
\cr
& \quad +\frac{2n\Gamma(1+\alpha)}{(1+x^2)^{(n+1+\alpha)/2}}\sin\left((n+1+\alpha)\arccot(x)-\frac{\pi\alpha}2\right)
\cr
& \qquad \times
\sum_{l=0}^{\lfloor\frac{n-1}2\rfloor}(-1)^l\binom{n}{2l+1} x^{n-2l-1}{}_2F_1(-2l-1,1+\alpha; 2; 1).
\end{align}
	
\end{proposition}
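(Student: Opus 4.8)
The plan is to transcribe the proof of Proposition~\ref{c:corolCHSH} almost verbatim, with the complex Christov functions in place of the complex Higgins functions and Lemma~\ref{p:lemmamun} in place of Lemma~\ref{p:lemmaln}. The starting observation is that the symmetry \eqref{e:munpm}, $\mu_{-n-1}(x)=-\overline{\mu_n(x)}$, together with the definitions \eqref{e:CC}--\eqref{e:SC}, gives $\CC_{2n}(x)=\Re(\mu_n(x))$ and $\SC_{2n+1}(x)=-\Im(\mu_n(x))$ for $n=0,1,2,\ldots$. In particular these functions are real, so their fractional Laplacians are real; and since the kernel in \eqref{e:fraclap0} is real, $(-\Delta)^{\alpha/2}$ commutes with $\Re$ and $\Im$, so that $(-\Delta)^{\alpha/2}\CC_{2n}(x)=\Re((-\Delta)^{\alpha/2}\mu_n(x))$ and $(-\Delta)^{\alpha/2}\SC_{2n+1}(x)=-\Im((-\Delta)^{\alpha/2}\mu_n(x))$. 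It then suffices to extract real and imaginary parts from the polynomial-times-rational expansion \eqref{e:fraclapmux} of Lemma~\ref{p:lemmamun}, and the negative-index cases require no further work.

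Next I would put every complex factor in \eqref{e:fraclapmux} into polar form, mirroring the passage to \eqref{e:fraclaplnexp}. Writing $x=\cot(s)$ with $s=\arccot(x)\in[0,\pi]$, one has $x+i=\sqrt{1+x^2}\,e^{i\arccot(x)}$, hence from $ix+1=i(x-i)$ one gets $(ix+1)^{-(n+1+\alpha)}=(1+x^2)^{-(n+1+\alpha)/2}e^{i(n+1+\alpha)(\arccot(x)-\pi/2)}$, while $(ix)^{n-l}=i^{n-l}x^{n-l}=e^{i\pi(n-l)/2}x^{n-l}$ is an ordinary integer power and causes no branch ambiguity. Multiplying the two phases, the $l$-th summand of $(-\Delta)^{\alpha/2}\mu_n(x)$ carries the single factor $e^{i(\theta-\pi(l+1)/2)}$ with $\theta=(n+1+\alpha)\arccot(x)-\pi\alpha/2$, which is the Christov analogue of \eqref{e:fraclaplnexp}.

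The remaining step is the parity bookkeeping. Taking the real part sends $e^{i(\theta-\pi(l+1)/2)}$ to $\sin(\theta-\pi l/2)=\sin\theta\cos(\pi l/2)-\cos\theta\sin(\pi l/2)$; since $\cos(\pi l/2)=(-1)^{l/2}$ for even $l$ and vanishes for odd $l$, while $\sin(\pi l/2)=(-1)^{(l-1)/2}$ for odd $l$ and vanishes for even $l$, replacing $l$ by $2l$ in the cosine-weighted part and by $2l+1$ in the sine-weighted part collapses the single sum over $0\le l\le n$ into the two sums over $\binom{n}{2l}$ and $\binom{n}{2l+1}$ with alternating signs and ranges $0\le l\le\lfloor n/2\rfloor$ and $0\le l\le\lfloor (n-1)/2\rfloor$, the hypergeometric coefficients being read off directly from \eqref{e:fraclapmux}; this is \eqref{e:fraclapCC}. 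For $\SC_{2n+1}$ one repeats the computation with $\Re$ replaced by $-\Im$, which sends $e^{i(\theta-\pi(l+1)/2)}$ to $\cos(\theta-\pi l/2)=\cos\theta\cos(\pi l/2)+\sin\theta\sin(\pi l/2)$; this interchanges the roles of the $\sin\theta$ and $\cos\theta$ terms and yields \eqref{e:fraclapSC}.

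I expect the main obstacle to be not the algebra, which is routine, but getting the polar-form and branch bookkeeping exactly right: one must assign argument $\pi/2$ to the factor $i$ in $ix+1=i(x-i)$ consistently with the principal branch fixed in Section~\ref{s:fraclapln} and with the convention $\arccot(x)\in[0,\pi]$ used for \eqref{e:fraclaplnexp}, and check that no non-integer power other than $(ix+1)^{-(n+1+\alpha)}$ enters the expansion, which is precisely why it matters that the powers of $ix$ are integer powers. Once this is set up, everything else follows the computation already carried out for \eqref{e:fraclapCH}--\eqref{e:fraclapSH}.
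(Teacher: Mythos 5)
Your proposal is correct and follows essentially the same route as the paper: write $\CC_{2n}=\Re(\mu_n)$ and $\SC_{2n+1}=-\Im(\mu_n)$, pass to polar form in the expansion \eqref{e:fraclapmux} of Lemma \ref{p:lemmamun} to get the Christov analogue of \eqref{e:fraclaplnexp}, and split the resulting sum according to the parity of $l$. As a side benefit your bookkeeping quietly corrects some slips in the printed version: the paper writes $\SC_{2n+1}=-\Im(\mu_{n+1})$ where it means $-\Im(\mu_n)$, and the displayed formulas \eqref{e:fraclapCC}--\eqref{e:fraclapSC} carry a spurious prefactor $2n$ (which would wrongly annihilate the $n=0$ case) and swap the last two arguments of the hypergeometric factor, which should read ${}_2F_1(-l,1+\alpha;1;2)$ as in \eqref{e:fraclapmux}.
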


\begin{proof}
	
The proof is very similar to that of Proposition \ref{c:corolCHSH}, so we omit some details. We first observe that, from \eqref{e:CC}, \eqref{e:SC} and \eqref{e:munpm} (see also \cite{boyd1990}),
$$
\CC_{2n}(x)  = \Re(\mu_n(x)), \quad
\SC_{2n+1}(x)  = -\Im(\mu_{n+1}(x)), \quad n = 0, 1, 2, \ldots;
$$

\noindent hence,
$$
\CC_{2n}(\cot(s)) = \frac{\cos(2ns) - \cos(2(n+1)s)}2, \quad
\SC_{2n+1}(\cot(s)) = \frac{\sin(2(n+1)s) - \sin(2ns)}2, \quad n = 0, 1, 2, \ldots.
$$
        
	\noindent We treat first the case where $n$ is a nonnegative integer. Then, we express all the complex numbers appearing in \eqref{e:fraclapmux} in their polar form, to obtain:
	\begin{equation}
	\label{e:fraclapmuexp}
	(-\Delta)^{\alpha/2}\mu_n(x) = \frac{\Gamma(1+\alpha)}{(1+x^2)^{(n+1+\alpha)/2}}\sum_{l = 0}^n\binom{n}{l}x^{n-l}e^{i((n+1+\alpha)\arccot(x) - \pi(l+1+\alpha)/2)}{}_2F_1(-l, 1 + \alpha; 1; 2).
	\end{equation}

	\noindent Now, taking the real part of \eqref{e:fraclapmuexp} gives
	\begin{align*}
	(-\Delta)^{\alpha/2}\CC_{2n}(x) & = \frac{\Gamma(1+\alpha)}{(1+x^2)^{(n+1+\alpha)/2}}\sum_{l=0}^{n}\binom{n}{l} x^{n-l}\sin\left(\left[(n+1+\alpha)\cot(x)-\frac{\pi\alpha}2\right]-\frac{\pi l}2\right){}_2F_1(-l,1+\alpha; 2; 1)
	\cr
	& = \frac{2n\Gamma(1+\alpha)}{(1+x^2)^{(n+1+\alpha)/2}}\sin\left((n+1+\alpha)\arccot(x)-\frac{\pi\alpha}2\right)\sum_{l=0}^{n}\cos\left(\frac{\pi l}2\right)\binom{n}{l}x^{n-l}{}_2F_1(-l,1+\alpha; 2; 1)
	\cr
	& \phantom{{}={}} -\frac{2n\Gamma(1+\alpha)}{(1+x^2)^{(n+1+\alpha)/2}}\cos\left((n+1+\alpha)\arccot(x)-\frac{\pi\alpha}2\right)\sum_{l=0}^{n}\sin\left(\frac{\pi l}2\right)\binom{n}{l} x^{n-l}{}_2F_1(-l,1+\alpha; 2; 1).
	\end{align*}
	
	\noindent Substituting $l$ by $2l$ in the first term, and by $2l+1$ in the second term of the last expression, we get \eqref{e:fraclapCC}.

    Likewise, taking the imaginary part of \eqref{e:fraclaplnexp} and changing the sign, we have
	\begin{align*}
	(-\Delta)^{\alpha/2}\SC_{2n+1}(x) & = \frac{\Gamma(1+\alpha)}{(1+x^2)^{(n+1+\alpha)/2}}\sum_{l=0}^{n}\binom{n}{l} x^{n-l}\cos\left(\left[(n+1+\alpha)\cot(x)-\frac{\pi\alpha}2\right]-\frac{\pi l}2\right){}_2F_1(-l,1+\alpha; 2; 1)
	\cr
	& = \frac{2n\Gamma(1+\alpha)}{(1+x^2)^{(n+1+\alpha)/2}}\cos\left((n+1+\alpha)\arccot(x)-\frac{\pi\alpha}2\right) \sum_{l=0}^{n}\cos\left(\frac{\pi l}2\right)\binom{n}{l}x^{n-l}{}_2F_1(-l,1+\alpha; 2; 1)
	\cr
	& \phantom{{}={}} +\frac{2n\Gamma(1+\alpha)}{(1+x^2)^{(n+1+\alpha)/2}}\sin\left((n+1+\alpha)\arccot(x)-\frac{\pi\alpha}2\right)\sum_{l=0}^{n}\sin\left(\frac{\pi l}2\right)\binom{n}{l} x^{n-l}{}_2F_1(-l,1+\alpha; 2; 1).
	\end{align*}
	
	\noindent Finally, substituting $l$ by $2l$ in the first term, and by $2l+1$ in the second term of the last expression, we get \eqref{e:fraclapSC}.
\end{proof}

\subsection{Cases with $\alpha\in\{0, 1, 2\}$}

Since we are working with $\alpha\in(0,2)$, it is interesting to see what happens if we evaluate \eqref{e:fraclapln}, \eqref{e:fraclapmun} and their cosine-like and sine-like expressions at the ends and at the middle point of the interval. For that purpose, we first express $\lambda_n(x)$ in \eqref{e:ln} as
$$
\lambda_n(x) = \left(\frac{i\sgn(n)x - 1}{i\sgn(n)x + 1}\right)^{|n|};
$$

\noindent hence,
\begin{align*}
\lambda_n'(x) & = \frac{2in}{(i\sgn(n)x + 1)^2}\left(\frac{i\sgn(n)x - 1}{i\sgn(n)x + 1}\right)^{|n|-1},
\cr
\lambda_n''(x) & = \frac{4inx - 4n^2}{(i\sgn(n)x + 1)^4}\left(\frac{i\sgn(n)x - 1}{i\sgn(n)x + 1}\right)^{|n|-2}.
\end{align*}

\noindent We also need Newton's binomial formula and its derivative:
$$
(1+z)^{n-1} = \sum_{k=0}^{n-1}\binom{n-1}{k}z^k \Longrightarrow (n-1)(1+z)^{n-2} = \sum_{k=1}^{n-1}k\binom{n-1}{k}z^{k-1}.
$$


\noindent Let us consider first the case with $\alpha = 0$, $n\in\mathbb Z\backslash\{0\}$:
\begin{align*}
\lim_{\alpha\to0^+}(-\Delta)^{\alpha/2}\lambda_n(x) & =
-\frac{2|n|\Gamma(1)}{i\sgn(n)x+1}{}_2F_1\left(1-|n|,1; 2;\frac{2}{i\sgn(n)x+1}\right)
\cr
& = \sum_{k=0}^{|n|-1}\binom{|n|}{k+1}\left(\frac{-2}{i\sgn(n)x+1}\right)^{k+1} = \sum_{k=1}^{|n|}\binom{|n|}{k}\left(\frac{-2}{i\sgn(n)x+1}\right)^k
\cr
& = \left(1 - \frac{2}{i\sgn(n)x+1}\right)^{|n|} - 1  = \lambda_n(x) - 1.
\end{align*}

\noindent Therefore, for $n = 0, 1, 2, \ldots$,
\begin{align*}
\lim_{\alpha\to0^+}(-\Delta)^{\alpha/2}\CH_{2n}(x) & = \CH_{2n}(x) - 1,
	\cr
\lim_{\alpha\to0^+}(-\Delta)^{\alpha/2}\SH_{2n+1}(x) & = \SH_{2n+1}(x).
\end{align*}

\noindent Observe that, as with $\lambda_0(x) = 1$ in \eqref{e:discontinuousa0}, this limit is singular for $\lambda_n(x)$ and $\CH_{2n}$, because their respective Fourier transforms are not classical functions. On the other hand, the limit for $\SH_{2n+1}(x)$ is continuous, because the function belongs to $\mathcal L^2(\mathbb R)$, and, hence, its Fourier transform is well defined. Therefore, if we consider instead $\lambda_n(x) - 1$ and $\CH_{2n} - 1$, the functions belong now to $\mathcal L^2(\mathbb R)$, and 
\begin{align*}
\lim_{\alpha\to0^+}(-\Delta)^\alpha(\lambda_n(x) - 1) = \lambda_n(x) - 1 = (-\Delta)^0(\lambda_n(x) - 1),
	\\
\lim_{\alpha\to0^+}(-\Delta)^\alpha(\CH_{2n}(x) - 1) = \CH_{2n}(x) - 1 = (-\Delta)^0(\CH_{2n}(x) - 1),
\end{align*}

\noindent i.e., the limit is now continuous. Likewise, the limit is also continuous for $\mu_n(x)$ (and, hence, for $\CC_{2n}(x)$ and $\SC_{2n+1}(x)$), because they are in $\mathcal L^2(\mathbb R)$. Indeed, from \eqref{e:mulambda},
$$
\lim_{\alpha\to0^+}(-\Delta)^{\alpha/2}\mu_n(x) = \lim_{\alpha\to0^+} \frac{(-\Delta)^{\alpha/2}\lambda_{n}(x) - (-\Delta)^{\alpha/2}\lambda_{n+1}(x)}2 = \frac{\lambda_n(x) - \lambda_{n+1}(x)}2 = (-\Delta)^0 \mu_n(x).
$$

\noindent Unlike the previous case, the case with $\alpha = 2$ poses no problems:
\begin{align*}
\lim_{\alpha\to2}(-\Delta)^{\alpha/2}\lambda_n(x) & =
-\frac{2|n|\Gamma(3)}{(i\sgn(n)x+1)^{3}}{}_2F_1\left(1-|n|,3; 2;\frac{2}{i\sgn(n)x+1}\right)
\cr
& = -\frac{2|n|}{(i\sgn(n)x+1)^{3}}\sum_{k=0}^{|n|-1}\binom{|n|-1}{k}(k+2)\left(-\frac{2}{i\sgn(n)x+1}\right)^{k}
\cr
& = \frac{4|n|(|n|-1)}{(i\sgn(n)x+1)^{4}}\left(1-\frac{2}{i\sgn(n)x+1}\right)^{|n|-2} - \frac{4|n|}{(i\sgn(n)x+1)^{3}}\left(1-\frac{2}{i\sgn(n)x+1}\right)^{|n|-1}
\cr
& = -\lambda_n''(x),
\end{align*}

\noindent i.e., we recover $(-\Delta)^{1}\lambda_n(x)$. This is also true for all $\mu_n(x)$, $\CH_{2n}(x)$, $\SH_{2n+1}(x)$, $\CC_{2n}(x)$ and $\SC_{2n+1}(x)$.

Finally, when $\alpha = 1$,
\begin{align*}
\lim_{\alpha\to1}(-\Delta)^{\alpha/2}\lambda_n(x) & =
-\frac{2|n|\Gamma(2)}{(i\sgn(n)x+1)^{2}}{}_2F_1\left(1-|n|,2; 2;\frac{2}{i\sgn(n)x+1}\right)
\cr
& = -\frac{2|n|}{(i\sgn(n)x+1)^{2}}\sum_{k=0}^{|n|-1}\binom{|n|-1}{k}\left(-\frac{2}{i\sgn(n)x+1}\right)^{k}
\cr
& = -\frac{2|n|}{(i\sgn(n)x+1)^{2}}\left(1 - \frac{2}{i\sgn(n)x+1}\right)^{|n|-1} = i\sgn(n)\lambda_n'(x).
\end{align*}

\noindent Note that, if we express the last equality in terms of $s$, we obtain $2|n|\sin^2(s)e^{i2ns}$, which is precisely the expression for $(-\Delta)^{1/2}\lambda_n(\cot(s))$ obtained in \cite{cayamacuestadelahoz2019} using a completely different approach. Consequently, for $n = 0, 1, 2, \ldots$, from \eqref{e:CH} and \eqref{e:SH}, respectively,
\begin{align*}
(-\Delta)^{1/2}\CH_{2n}(x) & = \frac{i\sgn(n)\lambda_n'(x) + i\sgn(-n)\lambda_{-n}'(x)}{2} =
\begin{cases}
0, & n = 0,
\cr
-\SH_{2n-1}'(x), & n\in\mathbb N,
\end{cases}
	\\
(-\Delta)^{1/2}\SH_{2n+1}(x) & = \frac{i\sgn(n+1)\lambda_{n+1}'(x) - i\sgn(-n-1)\lambda_{-n-1}'(x)}{2i} = \CH_{2n+2}'(x).
\end{align*}

\noindent Similarly, in the case of $\mu_n(x)$,
$$
(-\Delta)^{1/2}\mu_n(x) = \frac{i\sgn(n)\lambda_n'(x) - i\sgn(n+1)\lambda_{n+1}'(x)}{2} 
=
\begin{cases}
i\mu_n'(x), & n = 0, 1, 2, \ldots,
	\\
-i\mu_n'(x), & n = -1, -2, -3, \ldots,
\end{cases}
$$

\noindent or, equivalently, $(-\Delta)^{1/2}\mu_n(x) = i\sgn(n+1/2)\mu_n'(x)$. Therefore, for $n = 0, 1, 2, \ldots$, from \eqref{e:CC} and \eqref{e:SC}, respectively,
\begin{align*}
(-\Delta)^{1/2}\CC_{2n}(x) & = \frac{i\sgn(n+1/2)\mu_n'(x) - i\sgn(-n-1/2)\mu_{-n-1}'(x)}{2} = \SC_{2n+1}'(x),
\\
(-\Delta)^{1/2}\SC_{2n+1}(x) & = -\frac{i\sgn(n+1/2)\mu_n'(x) + i\sgn(-n-1/2)\mu_{-n-1}'(x)}{2i} = -\CC_{2n}'(x).
\end{align*}

\noindent Observe that all the equalities for $\alpha = 1$ can be derived from the fact that $\lambda_n(x)$ and $\mu_n(x)$ are eigenfunctions of the Hilbert transform (see for instance \cite{boydxu2011,weideman1995}).

\section{Numerical implementation of \eqref{e:fraclapln}}

\label{s:numerical}

In this Section, we will focus on the numerical implementation of \eqref{e:fraclapln}, because the other functions considered in this paper can be expressed in terms of $\lambda_n(x)$. Moreover, we will reduced ourselves to the case with $n\in\mathbb N$, because the case with nonpositive $n$ is trivially reduced to the former. All the experiments that follow have been carried out in an HP ZBook 15 G3 with processor Intel(R) Core(TM) i7-6700HQ CPU @ 2.60GHz, graphic card NVIDIA Quadro M1000M, and 16384MB of RAM, under Windows 7 Enterprise with Service Pack 1. We have mainly worked with \textsc{MATLAB} R2019b \cite{matlab}, but have also used \textsc{Mathematica} 11.3 \cite{mathematica} in a few examples.

Let us start by discussing the drawbacks of attempting to implement \eqref{e:fraclapln} by using implementations of ${}_2F_1$ in commercial mathematical software. Even if commands that approximate numerically ${}_2F_1$ functions are available in, among others, \textsc{Matlab} and \textsc{Mathematica}, it seems that there is still no satisfactory implementation of ${}_2F_1$ for all its possible values. For example, in our case, there seem to be cancellation errors, due to the fact that we are adding and subtracting alternatively very large numbers. Recall that the infinite series in \eqref{e:2F1} converges in general for $|z| < 1$, which in our case means $|z| = |2/(ix+1)| < 1$ or $|x| > \sqrt 3$, even if we are interested in evaluating it at any $x\in\mathbb R$. On the other hand, since the first parameter of ${}_2F_1$ in \eqref{e:fraclapln}, $1-n$, is negative or zero, it implies that the numerical approximation of \eqref{e:fraclapln} is reduced to computing a finite sum, which is always convergent, so, in principle, we should expect no serious accuracy issues from a commercial implementation. However, if we consider for instance $x = 0$, for which $|2/(ix+1)|$ is largest (and, hence, the worst case), it is easy to check that both \textsc{Matlab} and \textsc{Mathematica} return wrong results for not too large values of $n$. Take for example $n = 400$ and $\alpha = 1.5$; then, the \textsc{Mathematica} command \texttt{Hypergeometric2F1[-399, 2.5, 2, 2]} gives $6.2771\cdot10^{57}$, whereas the \textsc{Matlab} command \textsc{hypergeom([-399, 2.5], 2, 2)} yields $4.9217\cdot10^{18}$. This is due to the fact that the signs of $(1-n)_k$ alternate, whereas $(1+\alpha)_k$ is always positive; hence, we are summing large quantities with alternating signs, for which a 64-bit floating point representation is clearly insufficient to store all the required digits, giving rise to severe rounding errors. Nevertheless, it is possible to obtain the correct results with both programs by just introducing minor modifications. More precisely, both \texttt{N[Hypergeometric2F1[-399, 5/2, 2, 2]]} and \texttt{double(hypergeom([-399, str2sym('5/2')], 2, 2))} return the correct result, ${}_2F_1(-399, 2.5; 2; 2) = -21.2769\ldots$. In fact, if we omit respectively the commands \texttt{N} and \texttt{double}, we get a fraction with very large nominator and denominator. Observe that, in \textsc{Mathematica}, \texttt{5/2} is treated natively in a different way as \texttt{2.5}, which is stored using a floating point representation; whereas, in \textsc{Matlab}, to obtain a similar effect, the Symbolic Math Toolbox needs to be installed. Then, \texttt{str2sym('5/2')} transforms the string \texttt{'5/2'} into a symbolic object that stores the fraction $5/2$.

In the rest of the section, for the sake of simplicity, we will work exclusively with \textsc{Matlab}. The previous arguments suggest working with fractions whenever possible. However, if some of the parameters of ${}_2F_1$ are irrational, approximating them adequately by fractions might involve large numerators and denominators, which does not seem ideal, either. Instead, we have found more advisable to work with a larger number of digits, which can be done by means of the function \texttt{vpa} included in the Symbolic Math Toolbox. The name of this command comes from the initials of ``variable-precision arithmetic'', i.e., arbitrary-precision arithmetic, and evaluates by default its argument to at least 32 digits, even if another number of digits can be specified as a second argument of \texttt{vpa}; e.g., \texttt{vpa(str2sym('pi'), 500)} returns $500$ exact digits of $\pi$. Furthermore, it is possible to change globally the default number of digits to $d$ by means of \texttt{digits(d)}. Take for instance $n = 400$, $\alpha = \sqrt 3$, then \texttt{hypergeom([-399, 1 + sqrt(3)], 2, 2)} returns $-1.4831\cdot10^{18}$; whereas \texttt{digits(24);} followed by \texttt{hypergeom([-399, vpa(str2sym('1 + sqrt(3)'))], 2, 2)} returns the exact value of the first 24 digits of ${}_2F_1(-399, 1 + \sqrt 3; 2; 2)$, namely $-84.1742043148953740804326$. Therefore, in our opinion, the use of arbitrary precision is the easiest and safest way to guarantee that the results returned by ${}_2F_1$ are correct, and, indeed, research is currently been conducted on this area (see for instance \cite{johansson} and its references, where ${}_2F_1$ and other hypergeometric functions are computed in arbitrary-precision interval arithmetic).

Coming back to the implementation of \eqref{e:fraclapln}, note that \texttt{z} in \texttt{hypergeom([a,b],c,z)} can be a vector, but \texttt{a}, \texttt{b} and \texttt{c} must be scalars. Therefore, if we have a function $u(x)$ represented as say
$$
u(x) = \sum_{n=-N}^{N}a_n\lambda_{n}(x),
$$

\noindent and want to approximate its fractional Laplacian by using \texttt{hypergeom}, we have to invoke this command $N$ times using arbitrary precision, i.e. for $n = 1, \ldots, N$, which can be extremely expensive from a computational point of view. For instance, executing \texttt{hypergeom([1-n, vpa(str2sym('1 + sqrt(3)'))], 2, 2)}, for $n = 1, 2, \ldots, 40$, using $24$ digits, requires $24.31$ seconds, even if we are considering just one single value of $z$. To avoid this, we compute \eqref{e:fraclapln} by using directly a series representation, but in such a way that the evaluations for different $n$ are done simultaneously (and as much independently from $\alpha$ as possible), which is much more efficient. More precisely, we express \eqref{e:fraclapln} in the following form and evaluate it at $M$ different spatial nodes $x_0, \ldots, x_{M-1}$:
\begin{equation}
\label{e:fraclaplnpossum}
(-\Delta)^{\alpha/2}\lambda_n(x_j) = -\Gamma(1+\alpha)(ix+1)^{-\alpha}\sum_{k=1}^n\binom{n}{k}\binom{-1-\alpha}{k-1}\left(\frac{2}{ix_j+1}\right)^k, \quad n\in\{1, \ldots, N\}.
\end{equation}

\noindent After converting $\alpha$ and $\pi$ to arbitrary precision, we generate the spatial nodes and store them in one column vector $\mathbf x = (x_0, \ldots, x_{M-1})^T$. Then, we create a matrix $\mathbf A\in\mathcal M_{M\times N}(\mathbb C)$, such that its columns are precisely the powers $\{1, \ldots, N\}$ of the entries of $(2 / (ix_0 + 1), \ldots, 2 / (ix_{M-1} + 1))^T$:
$$
\mathbf A =
\begin{pmatrix}
\left(\dfrac{2}{ix_0 + 1}\right)^1 & \cdots & \left(\dfrac{2}{ix_0 + 1}\right)^N
	\\
\vdots & \ddots & \vdots
	\\
\left(\dfrac{2}{ix_{M-1} + 1}\right)^1 & \cdots & \left(\dfrac{2}{ix_{M-1} + 1}\right)^N
\end{pmatrix}.
$$

\noindent This can be done in parallel by simply typing \texttt{A = (2 ./ (1i * x + 1)) .\^{} (1:N);}. Afterward, we create a matrix $\mathbf B\in\mathcal M_{N\times N}(\mathbb C)$ that stores the binomial coefficients computed using arbitrary precision:
$$
\mathbf B =
\begin{pmatrix}
\binom{1}{1} & \binom{2}{1} & \binom{3}{1} & \cdots & \binom{N}{1}
	\\[0.5em]
0 & \binom{2}{2} & \binom{3}{2} & \cdots & \binom{N}{2}
	\\[0.5em]
0 & 0 & \binom{3}{3} & \cdots & \binom{N}{3}
	\\
\vdots & \vdots & \vdots & \ddots & \vdots
	\\
0 & 0 & 0 & \cdots & \binom{N}{N}
\end{pmatrix},
$$

\noindent which can be done column-wise by using that $\binom{n+1}{k+1} = \binom{n}{k+1} + \binom{n}{k}$. However, if we apply recursively this property, we get
$$
\binom{n+1}{k+1} = \binom{n}{k} + \binom{n}{k+1}  = \binom{n}{k} + \binom{n-1}{k} + \binom{n-1}{k+1} = \ldots = \sum_{j=0}^{n}\binom{j}{k},
$$

\noindent which allows to generate $\mathbf B$ row-wise as well.

This alternative implementation appears to be faster and is the one we have adopted. More precisely, after initializing $\mathbf B$ with \texttt{B = vpa(zeros(N, N));}, we store the first row, which is explicitly known, i.e., \texttt{B(1, :) = 1:N;}. Then, each remaining row is generated recursively by computing the cumulative sum of the entries of the previous row, i.e., \texttt{for n = 2:N, B(n, n:N) = cumsum(B(n-1, n-1:N-1)); end}. At this point, it is important to underline that both $\mathbf A$ and $\mathbf B$ are completely independent from $\alpha$, so they can be generated once and then be reused to evaluate \eqref{e:fraclaplnpossum} at different values of $\alpha$. Furthermore, the parts of \eqref{e:fraclaplnpossum} that depend actually on $\alpha$ (which we store in an arbitrary-precision variable \texttt{a}) can be stored in just two vectors $\mathbf c\in\mathcal M_{M\times1}(\mathbb C)$ and $\mathbf d\in\mathcal M_{1\times N}(\mathbb C)$; the former is a column vector consisting of the entries of $i\mathbf x + 1$ raised to the power $-\alpha$, i.e., \texttt{c = (1i * x + 1) .\^{} (-a);}, whereas the latter is a row vector formed by the values of $-\Gamma(1+\alpha)\binom{-1-\alpha}{k-1}$, for $k = 0, \ldots N-1$. Moreover, we observe that the entries of $\mathbf d$ can be generated by obtaining the cumulative product of the following quantities:
$$
\left\{-\Gamma(1+\alpha), \frac{-1-\alpha}{1}, \frac{-2-\alpha}{2}, \ldots, \frac{-(N+1)-\alpha}{N-1}\right\},
$$

\noindent i.e., \texttt{d = cumprod([-gamma(1 + a), -1 - a ./ (1:N-1)]);}. Combining all the previously defined elements, we generate a matrix $\M_\alpha\in\mathcal M_{M\times N}(\mathbb C)$, such that its columns correspond to \eqref{e:fraclaplnpossum} evaluated at $n = 1, \ldots, N$, respectively:
$$
\M_\alpha = \diag(\mathbf c)\mathbf A \diag(\mathbf d) \mathbf B,
$$

\noindent where $\diag(\mathbf c)\in\mathcal M_{M\times M}(\mathbb C)$ and $\diag(\mathbf d)\in\mathcal M_{N\times N}(\mathbb C)$ are the diagonal matrices whose diagonal entries are $\mathbf c$ and $\mathbf d$ respectively; in \textsc{Matlab}, \texttt{Ma = diag(c) * A * diag(d) * B;}. In this regard, let us mention that we have found that this equivalent expression executes a bit faster:
$$
\M_\alpha = (\mathbf A \circ (\mathbf c\cdot\mathbf d)) \mathbf B,
$$

\noindent where $\circ$ denotes the Hadamard product, i.e., the entrywise product between matrices or equal size. In that case, \texttt{Ma = (A .* (c * d)) * B;}. Finally, the resulting matrix can be cast to the 64-bit floating point format by means of \texttt{Ma = double(Ma);}. Observe that, if $\M_\alpha$ is very large, it might happen that \texttt{double} cannot be applied to a whole arbitrary-precision matrix, because the capacity of \textsc{Matlab} is exceeded; in that case, the conversion has to be done row-wise or column-wise, although the global impact is small.

In the following subsection, we will perform the numerical experiments; special attention will be given to the number of digits necessary to compute $\M_\alpha$ with satisfactory accuracy.

\subsection{Numerical experiments}

\label{s:numericalexperiments}

The procedure to generate $\M_\alpha\in\mathcal M_{M\times N}(\mathbb C)$ that we have just explained is by definition \emph{exact}, provided that a large enough number of digits is chosen. In our numerical experiments, we have observed that the number of necessary digits is virtually independent from the choice of $\alpha\in(0,2)$; in what follows, we have used systematically the irrational value $\alpha = \sqrt 3$. We have first considered only the worst case $x = 0$, and have thus approximated $(-\Delta)^{\sqrt3/2}\lambda_N(0)$, i.e., have computed the last element of the row vector $\M_{\sqrt3}\in\mathcal M_{1\times N}(\mathbb C)$ using arbitrary precision with an increasing number of digits, until satisfactory convergence is achieved. Let $m_N^{(k)}$ denote the $k$-digit approximation of $(-\Delta)^{\sqrt3/2}\lambda_N(0)$; then, we take as stopping criterion that
\begin{equation}
\label{e:epsmNk}
\left|\frac{m_N^{(k)} - m_N^{(k+1)}}{m_N^{(k+1)}}\right| < \varepsilon,
\end{equation}

\noindent where $\varepsilon = 2^{-52}$, \texttt{eps} in \textsc{Matlab}, is the so-called machine epsilon in 64-bit floating point arithmetic. Note that the use of the relative discrepancy between $m_N^{(k)}$ and $m_N^{(k+1)}$ is justified because $|(-\Delta)^{\sqrt3/2}\lambda_N(0)|$ quickly grows with $N$ (we have found experimentally that $|(-\Delta)^{\sqrt3/2}\lambda_N(0)| \approx 3.322 N^{\sqrt 3}$), and we are interested in choosing the adequate number of global digits (before and after the decimal point), not of correct decimals. Remark that, in the numerical experiments, we have found that, in general, the number of digits required does not diminish with $N$, even if, very occasionally, that quantity can be slightly smaller for $N+1$ than for $N$. Hence, in order to determine the number of digits for a given $N + 1$, it is completely safe to start generating $m_{N+1}^{(k)}$ with the same number of digits $k$ necessary to generate $m_{N}^{(k)}$ satisfying \eqref{e:epsmNk}, and then increase $k$ by one unity, until the convergence condition \eqref{e:epsmNk} is fulfilled. In Figure \ref{f:digitsMM0}, we have plotted the number of digits with respect to $N$; note that the graph is roughly a straight line with slope slightly less than $1/2$; in fact, the least squares regression line is $\hat y = 0.47598x + 6.6938$.

\begin{figure}[!htbp]
	\centering
	\includegraphics[width=0.5\textwidth, clip=true]{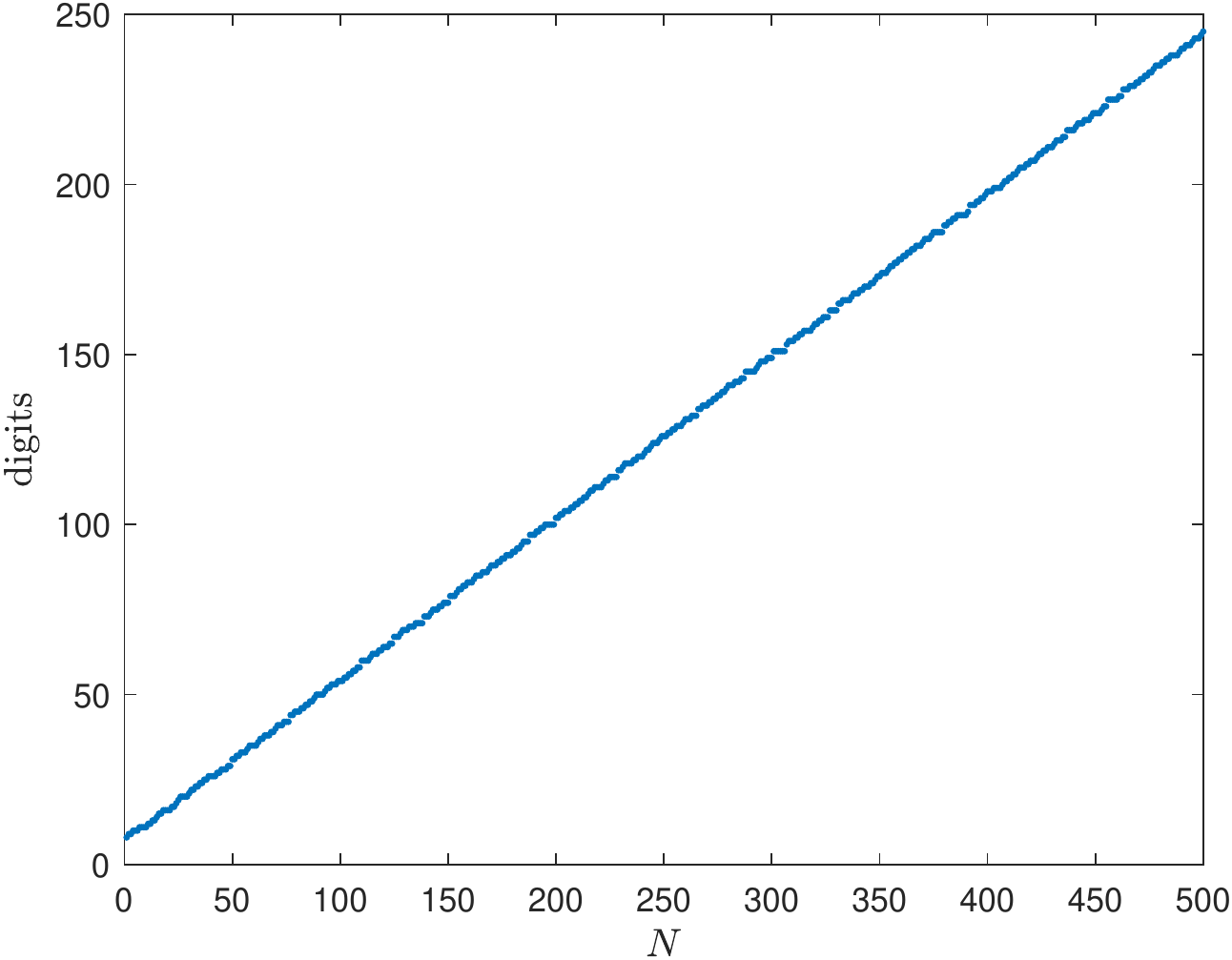}
	\caption{Number of digits necessary to approximate $\M_\alpha\in\mathcal M_{1\times N}(\mathbb C)$ with accuracy \eqref{e:epsmNk}, as a function of $N$.}
	\label{f:digitsMM0}
\end{figure}

On the other hand, for a given number of columns $N$, we have also considered $N$ nodes $\{x_0, \ldots, x_{N-1}\}$ and approximated the whole matrix $\M_{\sqrt3}\in\mathcal M_{N\times N}(\mathbb C)$. In order to compare the results with those in \cite{cayamacuestadelahoz2019}, we have taken equally-spaced non-end nodes $s_j$, which determine the choice of $x_j$:
\begin{equation}
\label{e:nodessj}
s_j = \frac{\pi(2j+1)}{2N} \Longrightarrow x_j = \cot\left(\frac{\pi(2j+1)}{2N}\right), \quad 0\le j\le N-1.
\end{equation}

\noindent Let $\M_{\sqrt3}^{(k)} \in\mathcal M_{N\times N}(\mathbb C)$ denote the approximation of $(-\Delta)^{(\sqrt3/2)}\lambda_n(x_j)$, for $n\in\{1, \ldots, N\}$, $j\in\{0, \ldots, N-1\}$, using arbitrary precision arithmetic with $k$ digits. Bearing in mind that $x_{N-1-j} = -x_j$, we have $(-\Delta)^{(\sqrt3/2)}\lambda_n(x_{N-1-j}) = (-\Delta)^{(\sqrt3/2)}\lambda_n(-x_j) = \overline{(-\Delta)^{(\sqrt3/2)}\lambda_n(x_j)}$, so the last $\lfloor N / 2\rfloor$ rows of $\M_{\sqrt3}^{(k)}$ can be generated from its first $\lfloor N / 2\rfloor$ rows, which reduces the global computational cost. Observe that, from \eqref{e:fraclaplnexpan}, it follows that $\lim_{x\to \pm\infty}(-\Delta)^{\sqrt3/2}|\lambda_n(x)| = 0$ in \eqref{e:fraclapln}, for all $n$, so, in general, we can expect very large differences in the orders of magnitude of the different entries of $\M_\alpha$. Therefore, denoting $\M_\alpha^{(k)} = [m_{ij}^{(k)}]$ and $\M_\alpha^{(k+1)} = [m_{ij}^{(k+1)}]$, we take now as stopping criterion that
\begin{equation}
\label{e:epsmijk}
\max_{ij}\left(\min\left\{\left|m_{ij}^{(k)} - m_{ij}^{(k+1)}\right|, \left|\frac{m_{ij}^{(k)} - m_{ij}^{(k+1)}}{m_{ij}^{(k+1)}}\right| \right\}\right) < \varepsilon,
\end{equation}

\noindent where, again, $\varepsilon = 2^{-52}$. We have found this criterion to be very adequate, because, when $m_{ij}^{(k)}$ and $m_{ij}^{(k + 1)}$ are infinitesimal, the absolute discrepancy is enough to establish convergence, whereas, for larger values, it is preferable to consider the relative discrepancy.

Similarly as in the previous example, in order to obtain the number of digits for a given $N + 1$, we start generating the matrix $\M_{\sqrt3}^{(k)}\in\mathcal M_{(N+1)\times(N+1)}(\mathbb C)$ with the number of digits $k$ necessary to generate $\M_{\sqrt3}^{(k)}\in\mathcal M_{N\times N}(\mathbb C)$ satisfying \eqref{e:epsmijk}, and then increase $k$ by one unity, until \eqref{e:epsmijk} is fulfilled. On the left-hand side of Figure \ref{f:digitsMMa}, we have plotted the number of digits with respect to $N$; the graph is very similar (but not identical) to that in Figure \ref{f:digitsMM0}, the least squares regression line being now $\hat y = 0.47777x + 7.3617$; and we have found very similar results for other values of $\alpha$. Obviously, if we consider another nodal distribution such that every node $x_j$ satisfies $2 / |ix_j + 1| \ll 1$, the number of digits will be smaller.
\begin{figure}[!htbp]
	\centering
	\includegraphics[width=0.5\textwidth, clip=true]{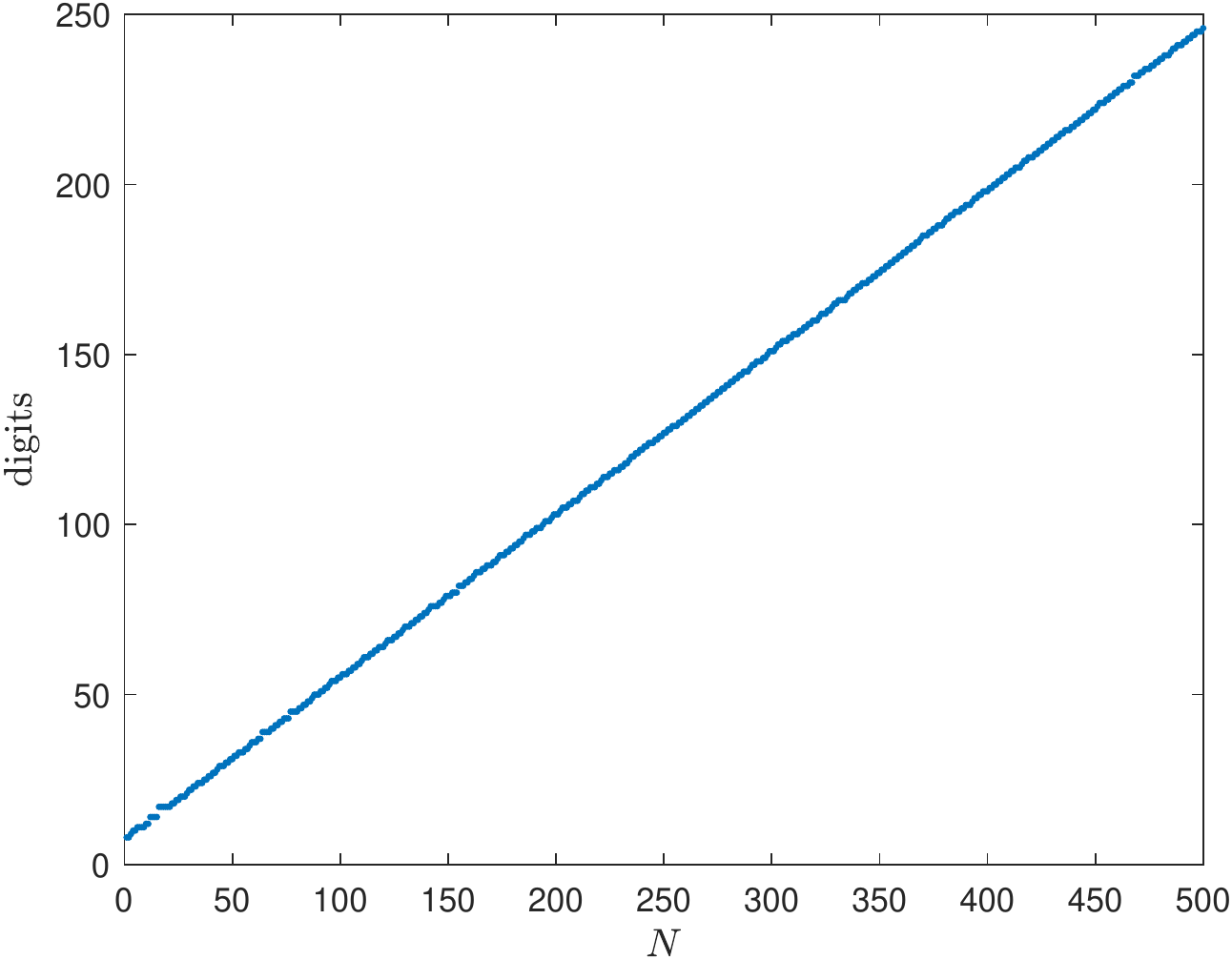}\includegraphics[width=0.5\textwidth, clip=true]{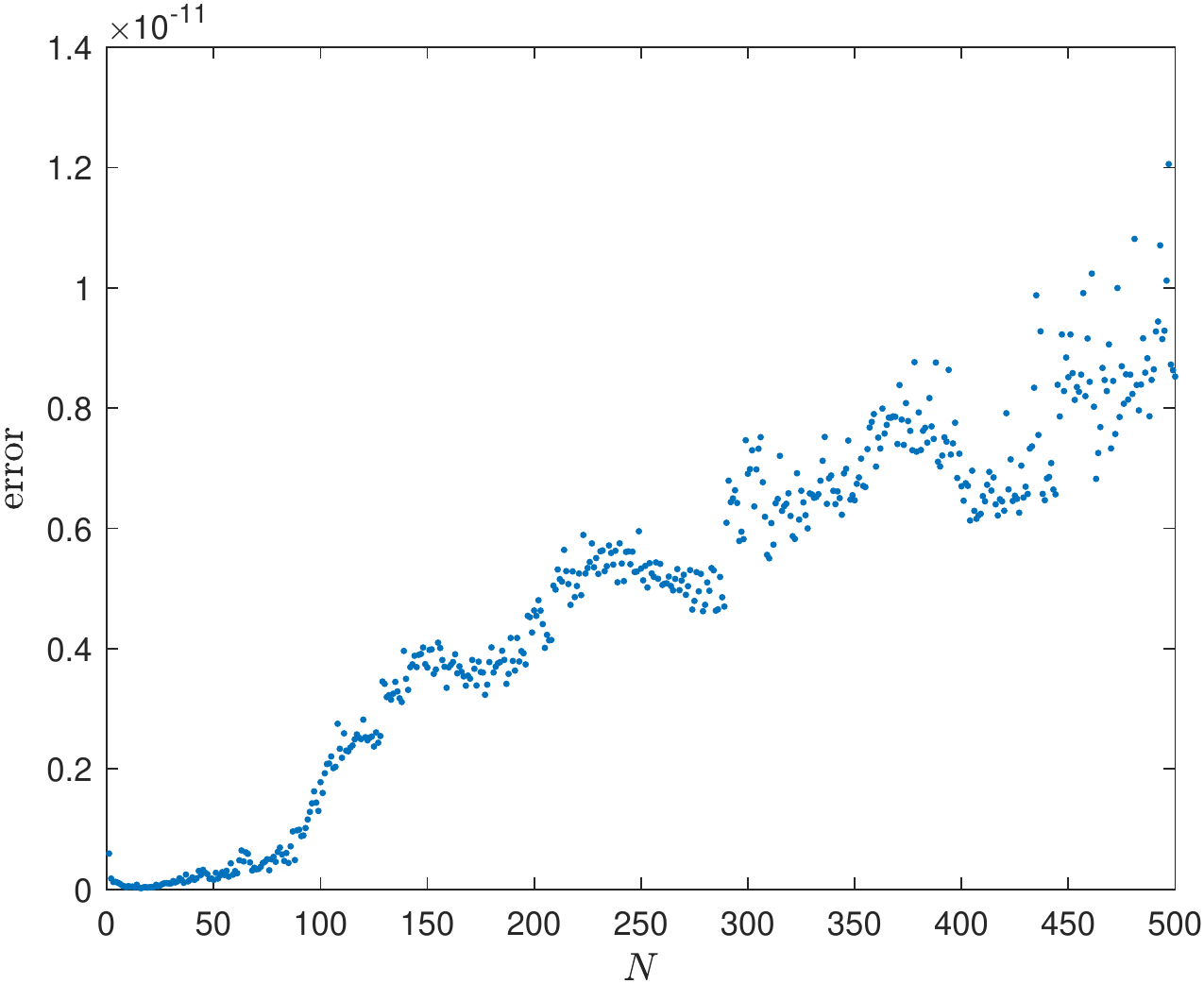}
	\caption{Left: Number of digits necessary to approximate $\M_\alpha^{vpa}\in\mathcal M_{M\times M}(\mathbb C)$, as a function of $M$, for $\alpha = \sqrt 3$. Right: Comparison with the results in \cite{cayamacuestadelahoz2019}, using \eqref{e:discrepancy}.}
	\label{f:digitsMMa}	
\end{figure}

We have also compared the approximation of the $N^2$ values $(-\Delta)^{(\sqrt3/2)}\lambda_n(x_j)$ via the matrices just generated, which we denote $\M_\alpha^{vpa}\in\mathcal M_{N\times N}(\mathbb C)$, with those given by the method explained in \cite{cayamacuestadelahoz2019}, which we denote $\M_\alpha^{old}\in\mathcal M_{N\times N}(\mathbb C)$; remark that we have adopted the method in \cite{cayamacuestadelahoz2019} with only minor modification to make it produce matrices of the required size, using $l_{lim}=1000$, for all the values of $N$. On the right-hand side of Figure \ref{f:digitsMMa}, we have plotted the discrepancy $\operatorname{d}(\M_\alpha^{vpa}, \M_\alpha^{old})$ between both (largely unrelated) techniques, using a formula similar to \eqref{e:epsmijk}:
\begin{equation}
\label{e:discrepancy}
\operatorname{d}(\M_\alpha^1, \M_\alpha^2) \equiv \max_{ij}\left(\min\left\{\left|m_{ij}^1 - m_{ij}^2\right|, \left|\frac{m_{ij}^1 - m_{ij}^2}{m_{ij}^1}\right| \right\}\right);
\end{equation}

\noindent such discrepancy is of the order of $10^{-11}$ for the first $500$ values of $N$. Note that, although \eqref{e:discrepancy} is not symmetric, the results provided by $\operatorname{d}(\M_\alpha^{vpa}, \M_\alpha^{old})$ and $\operatorname{d}(\M_\alpha^{old}, \M_\alpha^{vpa})$ show only infinitesimal variations in our case. We have also generated $\M_\alpha^{vpa}$ and $\M_\alpha^{old}$, for $\alpha \in\{0.01, 0.02, \ldots, 1.99\}$, working with 250 digits in the case of $\M_\alpha^{vpa}$; in Figure \ref{f:errorMaMa2alla}, we have plotted $\operatorname{d}(\M_\alpha^{old}, \M_\alpha^{vpa})$ as a function of $\alpha$, which is again of the order of $10^{-11}$. These results confirm the validity of both approaches when approximating the fractional Laplacian.
\begin{figure}[!htbp]
	\centering
	\includegraphics[width=0.5\textwidth, clip=true]{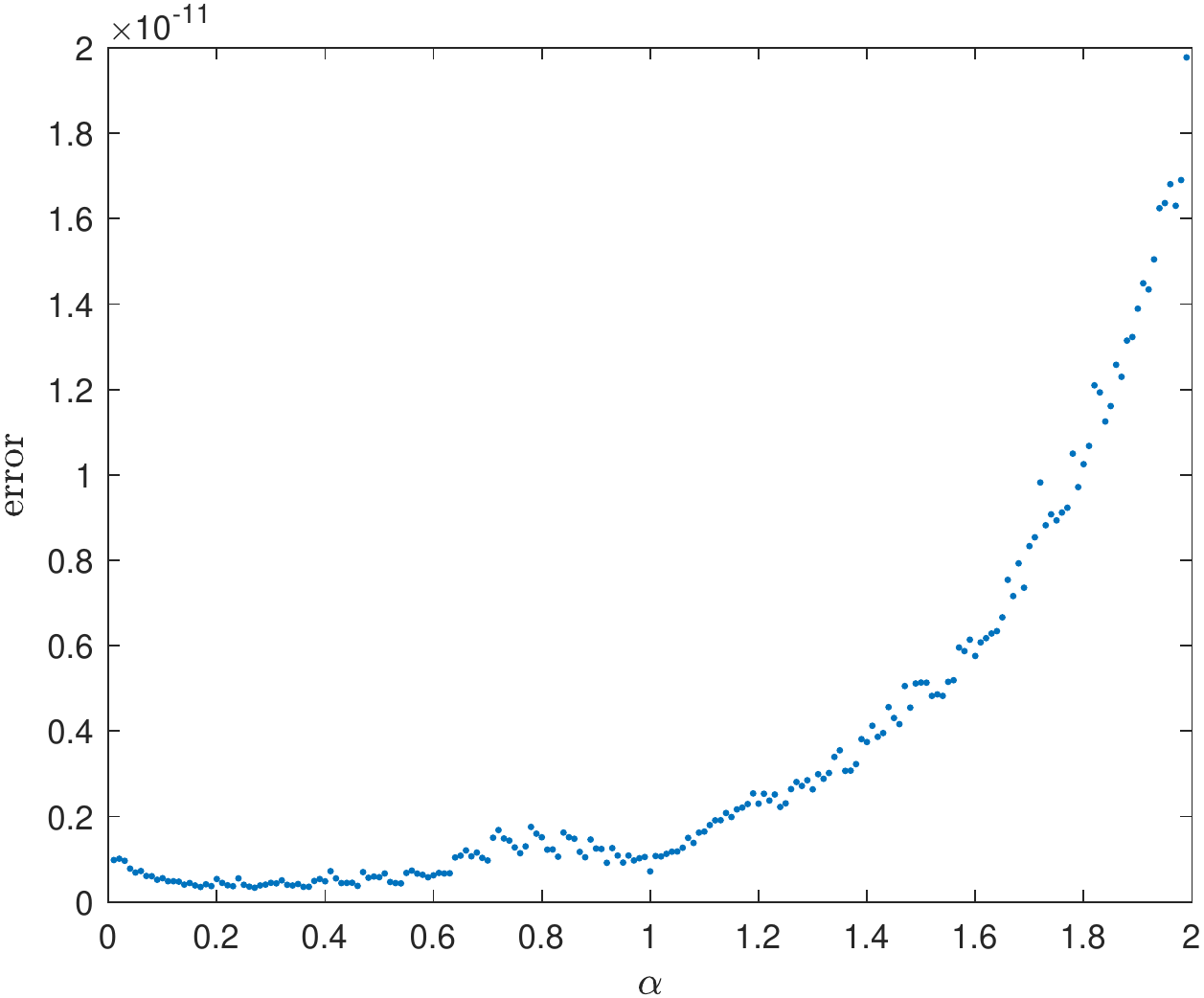}
	\caption{Comparison with the results in \cite{cayamacuestadelahoz2019}, using \eqref{e:discrepancy}, for $N = 500$ and different values of $\alpha$.}
	\label{f:errorMaMa2alla}
\end{figure}

The method that we are describing in this paper is, in comparison with the method described in \cite{cayamacuestadelahoz2019}, much simpler to implement and \textit{exact} by definition, but, in principle, much more expensive computationally, even if it still largely outperforms the Matlab function \texttt{hypergeom}. Recall that \texttt{hypergeom} uses transformation formulas like those in \cite[Ch. 15.3]{abramowitz}, which allow it to evaluate ${}_2F_1$ in \eqref{e:fraclapln}, using 64-bit floating point arithmetic, for a much larger range of values of $n$ than it would be possible by simply using the representation \eqref{hyper:2}, although it has to be invoked individually for each value of $n$. To give an idea of the time needed for each technique, we have considered $\M_\alpha\in\mathcal M_{N\times N}(\mathbf C)$, for $N = 330$, i.e., a size for which \texttt{hypergeom} produces accurate results without using arbitrary precision. We have generated column-wise $\M_\alpha$ with \texttt{hypergeom} using directly \eqref{e:fraclapln}, which we denote $\M_\alpha^{dp}$, where $dp$ stands for double (i.e., 64-bit) precision, needing 1265.04 sec, whereas $\M_\alpha^{old}$ (based on \cite{cayamacuestadelahoz2019}) and $\M_\alpha^{vpa}$ (using \texttt{vpa}) required only 3.44 sec and 54.64 sec, respectively.

To have a more complete picture of the possibilities and limitations of variable-precision arithmetic, we have also tested \textsc{Advanpix} \cite{advanpix}, the Multiprecision Computing Toolbox for \textsc{MATLAB}. Its main command is \texttt{mp}, which stands for multiple precision, and works pretty much in the same way as \texttt{vpa}, but, unlike \texttt{vpa}, it cannot receive symbolic objects as parameters; e.g., to obtain the multiply precision approximation of $\sqrt 3$, we just type \texttt{mp('sqrt(3)')}. By default, the number of significant digits of \texttt{mp} is 34 (quadruple precision); and it can changed globally to $d$ digits by means of \texttt{mp.Digits(d)}, or individually, as a second argument of \texttt{mp}. It is important to remark, however, that \texttt{vpa} and \texttt{mp} do not have exactly the same behavior; e.g, both \texttt{vpa(str2sym('sqrt(3)'), 2)} and \texttt{mp('sqrt(3)', 2)} show $1.7$ at the screen, but \texttt{double(vpa(str2sym('sqrt(3)'), 2)) - sqrt(3)} gives $1.1206\cdot10^{-10}$, whereas \texttt{double(mp('sqrt(3)', 2)) - sqrt(3)} gives $0.0023$. This phenomenon can explained because \texttt{vpa(str2sym('sqrt(3)'), 2)} guarantees at least two digits, even if more digits can be used internally, whereas \texttt{mp('sqrt(3)', 2)} really yields two digits.

Using \textsc{Advanpix}, we have generated again $\M_\alpha\in\mathcal M_{N\times N}(\mathbb C)$ for $\alpha = \sqrt 3$, which we denote $\M_\alpha^{np}$; the code is identical to that of $\M_\alpha^{vpa}$, except that we have replaced the three appearances of \texttt{vpa} with \texttt{mp}, namely \texttt{mp('pi')}, \texttt{mp('sqrt(3)')} and \texttt{mp(zeros(N, N))}. In Figure \ref{f:digitsMMmpa}, we have plotted the number of digits used to generate $\M_\alpha^{mp}$, and the least squares regression line is now $\hat y = 0.47686x + 14.708$. Therefore, for a given $N$, $\M_\alpha^{mp}$ requires some seven or eight more digits than $\M_\alpha^{vpa}$. On the other hand, the plot of $\operatorname{d}(\M_\alpha^{mp}, \M_\alpha^{old})$ is virtually identical to that on the right-hand side of Figure \ref{f:digitsMMa}, except for infinitesimal variations of the order of $10^{-16}$ for a few values of $N$.
\begin{figure}[!htbp]
	\centering
	\includegraphics[width=0.5\textwidth, clip=true]{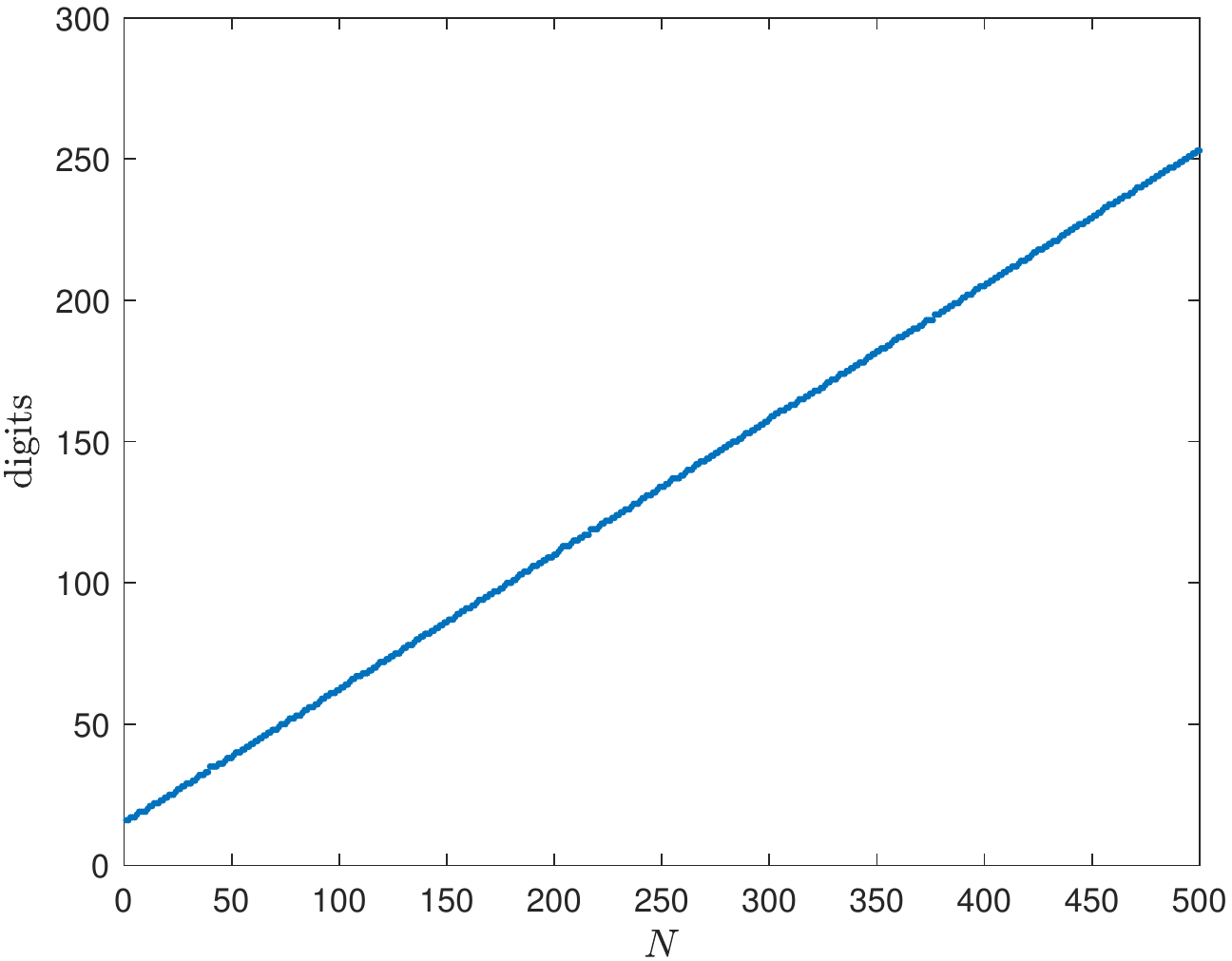}
	\caption{Number of digits necessary to approximate $\M_\alpha^{mp}\in\mathcal M_{M\times M}(\mathbb C)$, as a function of $M$}
	\label{f:digitsMMmpa}
\end{figure}

In Table \ref{t:times}, we show the times required to generate $\M_\alpha\in\mathcal M_{M\times M}(\mathbb C)$ using different techniques and sizes. These times are provided on a purely indicative basis, because they change slightly from execution to execution, and, probably, all of them can be improved by compiling the codes, etc. In our opinion, the results are quite revealing, allowing us to make some conclusions; indeed, for $N$ moderately large (e.g., $N = 500$), the method described in this paper, together with \textsc{Advanpix}, is absolutely competitive; and reasonably fast, when combined with \texttt{vpa}. Furthermore, \textsc{Advanpix}, can be still recommended to generate larger matrices (e.g., $N = 1000$). On the other hand, even if, in principle, it is possible to generate very large matrices (e.g., $N = 1500$, $N = 2000$ or even larger sizes), it may be advisable to use in those case the method described in \cite{cayamacuestadelahoz2019}, except when time is not an issue and the matrix needs to be generated only once or a few times. Finally, let us mention that the values of the discrepancy \eqref{e:discrepancy} between $\M_\alpha^{old}$ and $\M_\alpha^{np}$ for $N = 1000$, $N = 1500$ and $N = 2000$ are respectively $2.1238\cdot10^{-11}$, $5.1753\cdot10^{-11}$, and $8.2190\cdot10^{-11}$, i.e., of the order of $10^{-11}$.
\begin{table}[!htbp]
	\centering
	\begin{tabular}{|c||c||c|c||c|c||c|}
		\hline $N$ & $\M_\alpha^{old}$ & $\M_\alpha^{mp}$ & digits & $\M_\alpha^{vpa}$ & digits & $\M_\alpha^{dp}$ 
		\\
		\hline $330$ & $3.38$ & $6.18$ & $172$ & $54.64$ & $165$ & $1265.04$
		\\
		\hline $500$ & $7.84$ & $21.41$ & $253$ & $205.12$ & $246$
		\\
		\cline{1-6} $1000$ & $33.05$ & $237.10$ & $500$ & $2272.98$ & $500$
		\\
		\cline{1-6} $1500$ & $80.56$ & $1010.76$ & $750$ & $9922.05$ & $750$
		\\
		\cline{1-6} $2000$ & $158.01$ & $3422.38$ & $1000$
		\\
		\cline{1-4}
	\end{tabular}
	\caption{Elapsed time in seconds used for the generation of $\M_\alpha\in\mathcal M_{N\times N}(\mathbb C)$, for different values of $N$ and different techniques. $\M_\alpha^{old}$ corresponds to the method described in \cite{cayamacuestadelahoz2019}; $\M_\alpha^{mp}$ uses the \textsc{Advanpix} function \texttt{mp}; $\M_\alpha^{vpa}$ uses \texttt{vpa}; and $\M_\alpha^{vpa}$ uses \texttt{hypergeom} without arbitrary precision. In the case of $\M_\alpha^{mp}$ and $\M_\alpha^{vpa}$, the number of digits to generate the matrices is also offered.}\label{t:times}
\end{table}

\section*{Acknowledgments}

The authors acknowledge the financial support of the Spanish Government through the MICINNU project PGC2018-094522-B-I00, and of the Basque Government through the Research Group grant IT1247-19. J. Cayama also acknowledges the support of the Spanish Government through the grant BES-2015-071231.


\begin{thebibliography}{10}
	
	\bibitem{Bueno-OrovioKayBurrage2012}
	D.~Kay A.~Bueno-Orovio and K.~Burrage.
	\newblock Fourier spectral methods for fractional-in-space reaction-diffusion
	equations.
	\newblock {\em Journal of Computational Physics}, 2012.
	
	\bibitem{abramowitz}
	M.~Abramowitz and I.~A. Stegun.
	\newblock {\em {Handbook of Mathematical Functions}}.
	\newblock Dover, 1972.
	\newblock 10th printing with corrections.
	
	\bibitem{advanpix}
	{Advanpix LLC.}
	\newblock {Multiprecision Computing Toolbox for MATLAB, Version 4.7.0.13560}.
	\newblock http://www.advanpix.com, 2019.
	
	\bibitem{boyd1990}
	J.~P. Boyd.
	\newblock {The Orthogonal Rational Functions of Higgins and Christov and
		Algebraically Mapped Chebyshev Polynomials}.
	\newblock {\em Journal of Approximation Theory}, 61(1):98--10, 1990.
	
	\bibitem{boydxu2011}
	J.~P. Boyd and Z.~Xu.
	\newblock {Comparison of three spectral methods for the Benjamin-Ono equation:
		Fourier pseudospectral, rational Christov functions and Gaussian radial basis
		functions}.
	\newblock {\em Wave Motion}, 48(8):702--706, 2011.
	
	\bibitem{cayamacuestadelahoz2019}
	J.~Cayama, C.~M. Cuesta, and F.~de~la Hoz.
	\newblock {A Pseudospectral Method for the One-Dimensional Fractional Laplacian
		on $\mathbb R$}.
	\newblock {\em arXiv:1908.09143 [math.NA]}, 2019.
	
	\bibitem{christov}
	C.~I. Christov.
	\newblock {A Complete Orthonormal System of Functions in $L^2(-\infty,\infty )$
		Space}.
	\newblock {\em SIAM Journal on Applied Mathematics}, 42(6):1337--1344, 1982.
	
	\bibitem{higgins}
	J.~R. Higgins.
	\newblock {\em {Completeness and Basis Properties of Sets of Special
			Functions}}.
	\newblock Cambridge University Press, 1977.
	
	\bibitem{hilbert}
	D.~Hilbert.
	\newblock {\em {Grundz\"uge einer allgemeinen Theorie der linearen
			Integralgleichungen}}.
	\newblock Fortschritte der Mathematischen Wissenschaften in Monographien. Druck
	und Verlag von B.~G. Teubner, Leipzig und Berlin, 1912.
	\newblock In German.
	
	\bibitem{IlicLiuTurnerAnh2005}
	M.~Ilic, F.~Liu, I.~Turner, and V.~Anh.
	\newblock Numerical approximation of a fractional-in-space diffusion equation.
	\newblock {\em Fractional Calculus and Applied Analysis, An International
		Journal for Theory and Applicatios}, 6(3), 2005.
	
	\bibitem{johansson}
	F.~Johansson.
	\newblock {Computing Hypergeometric Functions Rigorously}.
	\newblock {\em ACM Transactions on Mathematical Software (TOMS)}, 45(3):30,
	2019.
	
	\bibitem{kwasnicki}
	M.~Kwa\'snicki.
	\newblock {Ten equivalent definitions of the Fractional Laplace Operator}.
	\newblock {\em Fract. Calc. Appl. Anal.}, 20(1):7--51, 2017.
	
	\bibitem{lischke}
	A.~Lischke~et al.
	\newblock {What is the Fractional Laplacian?}
	\newblock {\em arXiv:1801.09767v1 [math.NA]}, 2018.
	
	\bibitem{narayan}
	A.~C. Narayan and J.~S. Hesthaven.
	\newblock {A generalization of the Wiener rational basis functions on infinite
		intervals: Part I–derivation and properties}.
	\newblock {\em Mathematics and Computation}, 80(275):1557--1583, 2010.
	
	\bibitem{YangLiuTurner2010}
	F.~Liu Q.~Yang and I.~Turner.
	\newblock Numerical methods for fractional partial differential equations with
	riesz space fractional derivatives.
	\newblock {\em Applied Mathematical Modelling}, 34:200--218, 2010.
	
	\bibitem{matlab}
	{The MathWorks Inc.}
	\newblock {MATLAB, Version R2019b}.
	\newblock https://www.mathworks.com, 2019.
	
	\bibitem{weideman1995}
	J.~A.~C. Weideman.
	\newblock {Computing the Hilbert transform on the real line}.
	\newblock {\em Mathematics of Computation}, 64(210):745--762, 1995.
	
	\bibitem{wiener}
	N.~Wiener.
	\newblock {\em {Extrapolation, Interpolation, and Smoothing of Stationary Time
			Series}}.
	\newblock M.I.T. Press Paperback Series (Book 9), 1964.
	
	\bibitem{mathematica}
	{Wolfram Research, Inc.}
	\newblock {Mathematica, Version 11.3}.
	\newblock https://www.wolfram.com, 2018.
	
\end{thebibliography}
\end{document}